\newcommand{\version}{Ver.~0.0}
\newcommand{\setversion}[1]{\renewcommand{\version}{Ver.~{#1}}}
\title [Enhanced Orbit Embedding]
{Enhanced orbit embedding}
\author[K.~Nishiyama]{Kyo Nishiyama$^{\ast}$}
\address{
Department of Physics and Mathematics\\
Aoyama Gakuin University\\
Fuchinobe 5-10-1, Chuo-ku, Sagamihara 252-5258, Japan}
\email{kyo@gem.aoyama.ac.jp}
\thanks{$\ast)${\ } Supported by JSPS Grant-in-Aid for Scientific Research \#{25610008}.}
\dedicatory{Dedicated to Professor Fumihiro Sato on the occasion of his 65th birthday}
\date{\version\quad(compiled on \today)}
\subjclass[2010]{Primary 14L30; Secondary 14L35, 20G05, 13A50}
\keywords{}
\theoremstyle{plain}
\newtheorem{theorem}{Theorem}
\newtheorem{proposition}[theorem]{Proposition}
\newtheorem{corollary}[theorem]{Corollary}
\newtheorem{lemma}[theorem]{Lemma}
\newtheorem{assumption}[theorem]{Assumption}
\theoremstyle{definition}
\theoremstyle{remark}
\newtheorem{remark}[theorem]{\upshape Remark}
\numberwithin{equation}{section}
\numberwithin{theorem}{section}
\newcommand{\Z}{\mathbb{Z}}
\newcommand{\R}{\mathbb{R}}
\newcommand{\C}{\mathbb{C}}
\newcommand{\lie}[1]{\mathfrak{#1}}
\newcounter{thmenum}
\newenvironment{thmenumerate}{%
\begin{list}{$(\thethmenum)$}{%
\usecounter{thmenum}
\setlength{\labelsep}{.5em}
\setlength{\labelwidth}{-7pt}
\setlength{\topsep}{0pt}
\setlength{\partopsep}{0pt}
\setlength{\parsep}{0pt}
\setlength{\leftmargin}{3pt}
\setlength{\rightmargin}{0pt}
\setlength{\itemindent}{\leftmargin}
\setlength{\itemsep}{0pt}
}}
{\end{list}}
\newcommand{\mycomment}[1]{} 
\newlength{\lengthcup}
\newcommand{\diag}{\qopname\relax o{diag}}
\newcommand{\End}{\qopname\relax o{End}}
\newcommand{\id}{\qopname\relax o{id}}
\newcommand{\Aut}{\qopname\relax o{Aut}}
\newcommand{\Ad}{\qopname\relax o{Ad}}
\renewcommand{\Re}{\qopname\relax o{Re}}
\newcommand{\closure}[1]{\overline{#1}}
\newcommand{\transpose}[1]{\,{}^t{#1}}
\newcommand{\restrict}{\big|}
\newcommand{\Spec}{\mathop\mathrm{Spec}\nolimits{}}
\newcommand{\Sym}{\mathop{\mathrm{Sym}}\nolimits}
\newcommand{\orbit}{\mathbb{O}}
\newcommand{\GITquotient}{/\!/}
\newcommand{\git}{\GITquotient}
\newcommand{\GL}{\mathrm{GL}}
\newcommand{\Sp}{\mathrm{Sp}}
\newcommand{\Mat}{\mathrm{M}}
\newcommand{\gl}{\lie{gl}}
\newcommand{\inclusion}{\hookrightarrow}
\newcommand{\tildeG}{\widetilde{G}}
\newcommand{\tildeK}{\widetilde{K}}
\newcommand{\tildeg}{\widetilde{\lie{g}}}
\newcommand{\tildek}{\widetilde{\lie{k}}}
\newcommand{\tildep}{\widetilde{\lie{p}}}
\newcommand{\tildeL}{\widetilde{L}}
\newcommand{\tildeX}{\widetilde{X}}
\newcommand{\mattwo}[4]{\Bigl(\begin{array}{@{\,}c@{\;\;}c@{\,}}{#1} & {#2} \\ {#3} & {#4} \end{array}\Bigr)}
\begin{document}

\begin{abstract} 
Let $ \tildeG $ be an algebraic group acting on a variety $ \tildeL $, and   
$ G \subset \tildeG $ a subgroup which leaves a subvariety $ L \subset \tildeL $ stable.  
For a $ G $-orbit $ \orbit_G = G \, u \;\; (u \in L) $ in $ L $, we can associate an orbit 
$ \orbit_{\tildeG} = \tildeG \, u $ of $ \tildeG $ so that we get a map 
$ L/G \to \tildeL/\tildeG $ between orbit spaces, 
though this map is usually not injective.  
In this note, when $ G $ is a symmetric subgroup arising from an involutive anti-automorphism, 
we give certain sufficient conditions for the map $ L/G \to \tildeL/\tildeG $ to be injective  
after the method of Ohta \cite{Ohta.2008}.  
Our main concern here is to produce examples of enhanced Lie algebras (or enhanced $ \theta $-representations).  
We also analyze an obstruction which prevents the orbit space inclusion.
\end{abstract}

\maketitle

\section{Introduction}

Let $ \tildeG $ be an algebraic group acting on a variety $ \tildeL $.  
We denote by $ \tildeL/\tildeG $ the orbit space (the collection of all orbits) with quotient topology.  
Consider a subgroup $ G \subset \tildeG $ and let us assume that 
a subvariety $ L \subset \tildeL $ is stable under $ G $.  
Take a $ G $-orbit $ \orbit_G = G \, u \;\; (u \in L) $, 
and we associate it with a $ \tildeG $-orbit $ \orbit_{\tildeG} = \tildeG \, u $.  
Thus we get a continuous map $ L/G \to \tildeL/\tildeG $, 
though this map is usually not injective.  

In the case where $ \tildeL $ is a vector space with linear $ \tildeG $-action, 
Ohta \cite{Ohta.2008} studied embedding of $ G $-orbits in $ L $ into $ \tildeG $-orbits 
with application to the invariant theory in mind (see also \cite{Ohta.2010}).  
Ohta put some reasonable conditions to get the injectivity 
of the map $ L/G \inclusion \tildeL/\tildeG $.  
It seems that his conditions can be somewhat loosened so that we can manage much more interesting cases.

We generalize Ohta's conditions in two ways.  
One is to replace a vector space $ \tildeL $ (in Ohta's case) 
by a general variety.  
In application, this is important.  
The other one is to replace one of Ohta's conditions which depends on the realization of $ \tildeG $ 
and seemingly its ambient space.  
This replaced condition is actually very close to his original one, 
and seems to catch an important property of orbits in an abstract way.  
We also would like to emphasize Ohta's original method provides a very efficient tool to check 
this new condition.

The main theorem is stated in \S~\ref{setion:main.theorem}, however, 
it is helpful to explain our main example here.  

We take $ \tildeG = \GL_{2n}(\C) $ which acts on $ V = \C^{2n} $ by the matrix multiplication.  
Then $ \tildeG $ acts on $ \tildeL = V \oplus V^{\ast} \oplus \gl_{2n}(\C) $ in
natural way.  This is our \emph{enhanced Lie algebra}.
Let us consider a subgroup $ G = \Sp_{2n}(\C) $ acting on $ L = V \oplus \lie{sp}_{2n}(\C) $, 
which is embedded into $ \tildeL $ using $ V \inclusion V \oplus V^{\ast} $ as 
$ u \mapsto (u, - \transpose{u}) $.  For the Lie algebra part, we use a natural embedding as a subalgebra.  
Note that $ L $ is also an enhanced Lie algebra for $ \lie{sp}_{2n}(\C) $.  
Then, our theorem tells

\begin{theorem}
Natural map $ L/G \to \tildeL / \tildeG $ induces an embedding of orbit spaces
\begin{equation*}
L/ G = \Bigl( V \oplus \lie{sp}_{2n}(\C)  \Bigr) \Big/ \Sp_{2n}(\C) 
\inclusion 
\Bigl( V \oplus V^{\ast} \oplus \gl_{2n}(\C) \Bigr) \Big/ \GL_{2n}(\C) = \tildeL / \tildeG .
\end{equation*}
In particular, if we take a $ \GL_{2n}(\C) $-orbit $ \orbit_{\GL_{2n}} \subset \tildeL $, 
then the intersection $ \orbit_{\GL_{2n}} \cap L $ consists of a single $ \Sp_{2n}(\C) $-orbit.
This embedding takes closed $ G $-orbits to closed $ \tildeG $-orbits.
\end{theorem}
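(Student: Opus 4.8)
The plan is to exhibit this theorem as an instance of the general criterion of \S\ref{setion:main.theorem}, so the substance is choosing the right ``enhanced'' data and then checking the hypotheses. Let $J$ be the skew matrix of the symplectic form on $V=\C^{2n}$, so $\transpose J=-J$, and put $\theta(g)=J\transpose gJ^{-1}$ on $\tildeG=\GL_{2n}(\C)$; this is an involutive anti-automorphism, the associated involutive automorphism is $\sigma(g)=\theta(g)^{-1}$, and $G=\Sp_{2n}(\C)=\{g:\theta(g)=g^{-1}\}=\tildeG^{\sigma}$ is exactly the symmetric subgroup. On $\tildeL=V\oplus V^{\ast}\oplus\gl_{2n}(\C)$ introduce the linear involution $\vartheta(v,\xi,X)=(-J^{-1}\transpose\xi,\transpose vJ,-J^{-1}\transpose XJ)$; one checks $\vartheta^{2}=\id$, that $\vartheta(g\cdot x)=\sigma(g)\cdot\vartheta(x)$ for all $g\in\tildeG$, $x\in\tildeL$, and that the fixed space $\tildeL^{\vartheta}=\{(u,\transpose uJ,Y):u\in V,\ Y\in\lie{sp}_{2n}(\C)\}$ is exactly the image of $L=V\oplus\lie{sp}_{2n}(\C)$ under the embedding of the theorem (using $\lie{sp}_{2n}\subset\gl_{2n}$ and the identification $V\cong V^{\ast}$ implicit in $u\mapsto -\transpose u$). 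Thus the inclusion $(G,L)\hookrightarrow(\tildeG,\tildeL)$ fits the abstract framework, and the task becomes verifying the hypotheses of the main theorem for it.

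Next I would record the reduction underlying that theorem, since it pins down exactly what must be checked. The inclusion $\orbit_G\subseteq\orbit_{\tildeG}\cap L$ is automatic. Conversely, if $u\in L=\tildeL^{\vartheta}$ and $v=g\,u\in L$ with $g\in\tildeG$, applying $\vartheta$ gives $v=\vartheta(v)=\sigma(g)\,\vartheta(u)=\sigma(g)\,u$, so $c:=g^{-1}\sigma(g)$ lies in the ($\sigma$-stable) stabilizer $S:=\Stab_{\tildeG}(u)$ and satisfies $c\,\sigma(c)=1$; moreover $v\in\orbit_G$ iff $c=s\,\sigma(s)^{-1}$ for some $s\in S$. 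In cohomological terms, the number of $G$-orbits in $\orbit_{\tildeG}\cap L$ equals $\lvert\ker\!\big(H^{1}(\sigma,S)\to H^{1}(\sigma,\tildeG)\big)\rvert$. Since over $\C$ any invertible alternating form on $\C^{2n}$ is equivalent to the standard one, the cocycle identification $c\mapsto cJ$ gives $H^{1}(\sigma,\tildeG)=\ast$; hence the first (and, with it, the second) assertion of the theorem reduces to showing that for every $u\in L$ each $c\in S$ with $c\,\sigma(c)=1$ is of the form $s\,\sigma(s)^{-1}$ with $s\in S$ -- which is precisely the ``abstract orbit property'' the main theorem asks for.

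Verifying that property is the crux, and here Ohta's method is the efficient tool. I would try to produce $s$ by taking a square root of $c$ inside $S$: writing the Jordan decomposition $c=c_{s}c_{u}\in S$ (stable under $\sigma$, so $\sigma(c_{s})=c_{s}^{-1}$, $\sigma(c_{u})=c_{u}^{-1}$), the unipotent factor has a unique square root in $S$, automatically $\sigma$-anti-invariant, while the semisimple factor has square roots inside the $\sigma$-stable diagonalizable group $\overline{\langle c_{s}\rangle}\subset S$; choosing them compatibly yields $s\in S$ with $s^{2}=c$ and $\sigma(s)=s^{-1}$, whence $h:=g\,s\in G$ and $h\,u=g\,u=v$. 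The real obstacle is exactly the compatible choice for the semisimple part: the sign ambiguity lives in the $2$-torsion $\overline{\langle c_{s}\rangle}[2]$ with its $\sigma$-action, and the obstruction to killing it is a class in $\operatorname{coker}(1+\sigma)$ on that $\mathbb{F}_{2}$-vector space -- this is the obstruction the paper isolates. The work is then to show this obstruction vanishes for the enhanced datum $(w,\transpose wJ,Y)$; I would do so by reducing along the $Y$-module structure of $V$ (Jordan decomposition of $Y\in\lie{sp}_{2n}$, then the nilpotent case, then single Jordan blocks with their induced alternating pairings), where the presence of the vector $w$ rigidifies $S$ enough that Ohta's transpose bookkeeping settles each normal-form case.

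Finally, for the closed-orbit clause I would use the Kempf--Ness picture. Pick a $\sigma$-stable maximal compact $U=\U(2n)\subset\tildeG$ with $U\cap G=\USp(2n)$ a maximal compact of $G$, and a $U$-invariant Hermitian form on $\tildeL$ averaged over $\langle\vartheta\rangle$, so that $\tildeL=L\oplus L'$ is an orthogonal splitting into the $(\pm1)$-eigenspaces of $\vartheta$. If $\orbit_G$ is closed, translate $u$ to a point of minimal norm for $G$; then the $G$-moment map vanishes at $u$, and for $Z$ in the $(-1)$-eigenspace $\lie{s}$ of $d\sigma$ one has $Z\!\cdot\!u\in L'$, so $h(Z\!\cdot\!u,u)=0$ by orthogonality, whence the full $\tildeG$-moment map vanishes at $u$ and $\orbit_{\tildeG}=\tildeG u$ is closed (equivalently, invoke the corresponding clause of the main theorem). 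Of the whole argument I expect only the third paragraph to be genuinely hard; everything else is formal once the $\vartheta$-compatible structures are in place.
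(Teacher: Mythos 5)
Your reduction in the second paragraph is correct and matches the paper's: from $Y=g\cdot X$ with $X,Y\in L$ one gets a twisted-stabilizer element, and injectivity of $L/G\to\tildeL/\tildeG$ is exactly the statement that this element is a coboundary in $\Stab_{\tildeG}(X)$. But the crux — your third paragraph — is where the proof actually lives, and you do not carry it out: you describe a plan (Jordan decomposition of $c$, square roots in $\overline{\langle c_s\rangle}$, an obstruction in the $2$-torsion, then a case-by-case reduction over Jordan normal forms of $Y$ ``where the presence of the vector $w$ rigidifies $S$'') and explicitly defer the verification. That verification is the theorem; without it you have only restated what must be proved. Moreover the route you sketch is needlessly hard. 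The paper closes this step in one stroke: writing $h=\sigma(g)g\in\tildeG_X$ with $\sigma(h)=h$, it takes $f=f(h)$ for a polynomial $f(T)$ with $f(h)^2=h$ (possible since $h$ is invertible, by functional calculus on generalized eigenspaces, normalized so that $f(1)=1$). Because the relevant $\sigma$ here is an involutive anti-automorphism of the whole matrix algebra $\End(V)$ — not merely of the group — it commutes with evaluating polynomials at $h$, so $\sigma(f(h))=f(\sigma(h))=f(h)$ automatically; and $f(h)$, being a polynomial in $h$, fixes $u$, fixes $v$, and commutes with $A$, hence lies in $\tildeG_X$. There is no sign ambiguity and no $\operatorname{coker}(1+\sigma)$ obstruction to analyze in this setting; the obstruction you describe is genuine only when $\sigma$ involves group inversion and so fails to extend to the matrix algebra (this is precisely what happens in the paper's final-section counterexample with $\sigma(g)=I_{1,1}^{-1}g^{-1}I_{1,1}$). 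Identifying and exploiting that algebra-level anti-automorphism is the missing idea.

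On the closed-orbit clause your Kempf--Ness argument (choose a $\theta$-stable maximal compact, average the Hermitian form so that $L\perp L'$, and observe that for $Z$ in the $(-1)$-eigenspace of $d\theta$ one has $Z\cdot u\in L'$, so minimality for $G$ forces minimality for $\tildeG$) is a legitimately different route from the paper, which instead invokes Luna's criterion with $\langle\theta\rangle\ltimes\tildeG$ acting on $\tildeL$ and $H=\langle\theta\rangle$. Your version is fine for a linear action on a vector space, which is the case here; Luna's criterion has the advantage of also yielding finiteness of $\C[L]^G$ over $\C[\tildeL]^{\tildeG}\restrict_L$, which the paper uses later. But this side of the proposal does not repair the gap in the injectivity argument.
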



In the end of Section~\ref{section:enhanced.Lie.algebra}, 
we will discuss on an application to invariant theory.

In the course of arguments, 
we show another new example where a restriction of this orbit embedding is still an embedding.
Let $ \tildeK = \GL_n(\C) \times \GL_n(\C) $ which is diagonally embedded into $ \tildeG = \GL_{2n}(\C) $.  
Then $ K = G \cap \tildeK $ is isomorphic to $ \GL_n(\C) $.  
Let $ \Mat_n(\C) \oplus \Mat_n(\C) $ be anti-diagonally embedded into $ \gl_{2n}(\C) $, 
which is considered to be $ \tildeg / \tildek $ 
(the tangent space of $ \tildeG / \tildeK $ at the base point).  
Similarly, we see that $ \Sym_n(\C) \oplus \Sym_n(\C) $ is anti-diagonally embedded into 
$ \lie{sp}_{2n}(\C) \subset \gl_{2n}(\C) $, 
which can be identified with $ \lie{g}/\lie{k} \simeq T_{eK} (G/K) $.  
In this setting we have

\begin{corollary}
Under the above setting, 
there is an embedding of orbit space
\begin{equation*}
\begin{aligned}
& \Bigl( \C^n \oplus \Sym_n(\C) \oplus \Sym_n(\C) \Bigr) \Big/ \GL_n(\C) 
\\
& \qquad \qquad
\inclusion 
\Bigl( \C^{2n} \oplus \Mat_n(\C) \oplus \Mat_n(\C) \Bigr) \Big/ \GL_n(\C) {\times} \GL_n(\C) .
\end{aligned}
\end{equation*}
\end{corollary}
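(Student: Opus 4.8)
The plan is to recognize the two orbit spaces of the corollary as the fixed‑point data, under a compatible involution $\theta$, of the enhanced $\GL_{2n}(\C)$‑picture of the Theorem, and then to apply the main theorem of \S\ref{setion:main.theorem} once more, to a \emph{smaller} symmetric pair. Let $\theta=\Ad(\diag(I_n,-I_n))$ on $\tildeG=\GL_{2n}(\C)$, so $\tildeG^{\theta}=\tildeK=\GL_n(\C)\times\GL_n(\C)$ and, restricting along $G\subset\tildeG$, $G^{\theta}=K=\Sp_{2n}(\C)\cap\tildeK\simeq\GL_n(\C)$. Let $\theta$ act on $\tildeL=V\oplus V^{\ast}\oplus\gl_{2n}(\C)$ by $\diag(I_n,-I_n)$ on $V$, by \emph{minus} its contragredient on $V^{\ast}$, and by conjugation on $\gl_{2n}(\C)$; the sign twist on the $V^{\ast}$‑summand is exactly what makes the embedding $u\mapsto(u,-\transpose{u})$ (contraction with the symplectic form) $\theta$‑equivariant, and then $\theta$ preserves $L=V\oplus\lie{sp}_{2n}(\C)$. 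Passing to the $(-1)$‑eigenspaces of $\theta$, and doing the routine bookkeeping, one obtains $\tildeL^{-\theta}=V^{-\theta}\oplus(V^{\ast})^{-\theta}\oplus\gl_{2n}(\C)^{-\theta}\cong\C^{2n}\oplus\Mat_n(\C)\oplus\Mat_n(\C)$ and $L^{-\theta}=V^{-\theta}\oplus\lie{sp}_{2n}(\C)^{-\theta}\cong\C^n\oplus\Sym_n(\C)\oplus\Sym_n(\C)$, carrying the $\tildeK$‑ and $K$‑actions of the statement (the matrix summands being the anti‑diagonal pieces $\tildeg/\tildek$ and $\lie{sp}_{2n}(\C)/\lie{k}$). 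Thus the corollary is precisely the claim that $L^{-\theta}/K\inclusion\tildeL^{-\theta}/\tildeK$.

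Now $K\simeq\GL_n(\C)$ is a symmetric subgroup of $\tildeK=\GL_n(\C)\times\GL_n(\C)$: it is the set of invariants of the involutive anti‑automorphism $\tau(g_1,g_2)=(\transpose{g_2},\transpose{g_1})$ (in the sense $\tau(x)=x^{-1}$), and $\tau$ is nothing but the restriction to $\tildeK$ of the involutive anti‑automorphism that defines $\Sp_{2n}(\C)\subset\GL_{2n}(\C)$. So one applies the main theorem to $K\subset\tildeK$ acting on $L^{-\theta}\subset\tildeL^{-\theta}$, and the work is to verify its hypotheses here. Two are routine: that $L^{-\theta}$ is the eigenspace cut out inside $\tildeL^{-\theta}$ by the involution attached to $\tau$ — on $\Mat_n(\C)\oplus\Mat_n(\C)$ this picks out the pairs $(B,C)$ with $B$ and $C$ symmetric, and on the vector summand it is immediate — and that $\tildeK\cdot L^{-\theta}$ is dense in, in fact equal to, $\tildeL^{-\theta}$, which on the matrix summand is the standard fact that a generic pair of $n\times n$ matrices is $\tildeK$‑conjugate to a pair of symmetric ones, and on the vector summand is clear. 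The remaining, substantive hypothesis is the abstract orbit condition of the main theorem — the one replacing Ohta's realization‑dependent condition — and I would check it by Ohta's method, reducing it to an explicit description of which $\tildeK$‑orbits in $\tildeL^{-\theta}$ meet $L^{-\theta}$.

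I expect that last verification to be the main obstacle. It is also the point at which one must resist a shortcut: the corollary is \emph{not} the restriction of the Theorem's embedding $L/G\inclusion\tildeL/\tildeG$ along $K\subset G$, since $K$‑orbits on $L^{-\theta}$ do \emph{not} inject into $G$‑orbits on $L$ — already for $n=1$ the two $K$‑orbits of regular nilpotent elements of $\lie{sp}_2(\C)^{-\theta}$ fuse into one $\Sp_2(\C)$‑orbit in $\lie{sp}_2(\C)$. What rescues the statement is that $\tildeK=\GL_n(\C)\times\GL_n(\C)$ is far smaller than $\GL_{2n}(\C)$ and cannot perform that fusion, and this must be seen through the orbit condition rather than by restriction. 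Together with the sign‑and‑transpose bookkeeping of the first paragraph — which, incidentally, exhibits $(\tildeK,K)$ up to an outer twist as the group pair $(\GL_n(\C)\times\GL_n(\C),\Delta\GL_n(\C))$ and thereby pins down the $\GL_n(\C)\times\GL_n(\C)$‑module ``$\C^{2n}$'' — this is where the genuine content lies. The ``closed orbits to closed orbits'' clause of the main theorem then carries over automatically, though it is not part of the corollary's statement.
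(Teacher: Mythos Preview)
Your overall plan---apply Theorem~\ref{theorem:main.theorem.abstract.version} to the pair $K\subset\tildeK$ acting on a $\sigma$-stable, $\tildeK$-stable subspace of $\tildeL$---is the paper's approach (Theorem~\ref{theorem:complex.symm.pair.typeA} followed by restriction to the subspace $\tildeL_2$). But your execution has errors and misses the key observation. The sign twist on $V^{\ast}$ is wrong: with $\theta=I_{n,n}$ on $V$ and \emph{minus} the contragredient on $V^{\ast}$, one computes $\theta\bigl((u,-\transpose{u})\bigr)=(I_{n,n}u,\,+\transpose{(I_{n,n}u)})$, which is not of the form $(w,-\transpose{w})$, so your $\theta$ does not preserve $L$; the untwisted contragredient already makes the embedding equivariant, and then $\tildeL^{-\theta}=W^{-}\oplus(W^{-})^{\ast}\oplus(\Mat_n\oplus\Mat_n)$ agrees with the paper's $\tildeL_2$ up to swapping the two $\GL_n$-factors. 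Density of $\tildeK\cdot L^{-\theta}$ in $\tildeL^{-\theta}$ is not a hypothesis of Theorem~\ref{theorem:main.theorem.abstract.version} at all, so that check is superfluous.

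The substantive gap is your plan for Assumption~\ref{assumption:antisymmetric.square.in.Gx}: ``reducing it to an explicit description of which $\tildeK$-orbits in $\tildeL^{-\theta}$ meet $L^{-\theta}$'' conflates the hypothesis with the conclusion of the theorem. The assumption asks only that each $\sigma$-symmetric $h\in\tildeK_x$ admit a $\sigma$-symmetric square root in $\tildeK_x$, and the verification is a one-line remark you have overlooked: the square root $f=f(h)$ constructed in the proof of Theorem~\ref{theorem:orbit.embedding.for.enhanced.Lie.algebra} is a \emph{polynomial} in $h$, so if $h$ is block-diagonal (i.e.\ $h\in\tildeK$) then $f$ is block-diagonal as well, whence $f\in\tildeK_x$ automatically. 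No orbit classification is needed; this is precisely why the paper can ``play the same game'' with $\tildeK$ in place of $\tildeG$ at no extra cost. Your caution against deducing the corollary by restricting along $K\subset G$ is well placed, but that pitfall simply does not arise in the paper's direct argument.
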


This corollary should be interpreted as an orbit embedding for 
\emph{enhanced $ \theta $-repre\-sentations}.  
Note that the actions of $ K = \GL_n(\C) $ and $ \tildeK = \GL_n(\C) \times \GL_n(\C) $ are somewhat twisted, 
and also the embedding is twisted too.  
See \S~\ref{sec:further.examples} for details.

\medskip

Our treatment here provides a useful method to determine whether 
a particular $ \tildeG $-orbit intersects $ L $ in a single $ G $-orbit.  
In the last section, we exhibit this by using a simple example, where $ \tildeG = \GL_2(\C) $.

\medskip

\textbf{Acknowledgement.}  
The author thanks the referee for suggestions which are of great help to improve this note.  
Also he is indebted to the referee for pointing out a serious error in the first draft. 
The author also thanks Takuya Ohta, Lucas Fresse, Hiroyuki Ochiai and Minoru Itoh for useful discussions.  
Anthony Henderson kindly informed the author that $ \gamma_k $ in 
Corollary~\ref{corollary:generators.of.invariants.for.SpV} 
vanishes when $ k $ is even.  

The collaboration with Lucas is a good motivation to this short note, and 
the on-going joint works with him will appear elsewhere.

Last but not least, the author expresses 
a deep admiration to Fumihiro-sensei for his contributions to mathematics.

\section{Enhanced Lie algebra and the orbit embedding}
\label{section:enhanced.Lie.algebra}

Let $ V = \C^{2 n} $ be a vector space with symplectic form $ \langle , \rangle $ defined by 
\begin{equation}
\langle u, v \rangle = \transpose{u} J v , 
\qquad
\text{where } 
J = \Bigl( \begin{array}{r|r}
 & \;\;1_n \\ \hline
-1_n & 
\end{array} \Bigr) .
\end{equation}
We put $ \tildeG = \GL(V) = \GL_{2n}(\C) $ and 
consider the following anti-automorphism $ \sigma $ of $ \tildeG $:
\begin{equation}
\sigma(g) = J^{-1} \transpose{g} J 
\qquad
(g \in \tildeG).  
\end{equation}
This means that $ \sigma(g) = g^{\ast} $, the adjoint of $ g $ with respect to the symplectic form $ \langle , \rangle $.  
We naturally extend it to $ \End(V) $ and denote it by the same letter $ \sigma $.
Note that $ \sigma $ is involutive, i.e., $ \sigma^2 = \id $.  
Thus $ \theta(g) = \sigma(g)^{-1} $ defines an involutive automorphism of $ \tildeG $, 
and we put 
\begin{equation}
G := G^{\theta} 
= \{ g \in \tildeG \mid \theta(g) = g \} 
= \{ g \in \tildeG \mid \sigma(g) = g^{-1} \} , 
\end{equation}
the symplectic group $ \Sp_{2n}(\C) $, 
which is a symmetric subgroup of $ \tildeG $.  
Our main concern here is an action of $ \tildeG $ on an enhanced Lie algebra
\begin{equation}
\tildeL := V \oplus V^{\ast} \oplus \End(V) 
\end{equation}
with the action explicitly given as follows: 
for $ X = (u, \transpose{v}) + A \in V \oplus V^{\ast} \oplus \End(V) $ and $ g \in \tildeG $, 
\begin{equation}
g \cdot X 
= (g u , \transpose{(\sigma(g)^{-1} v)}) + g A g^{-1}
= (g u , \transpose{(\theta(g) v)}) + \Ad(g) A .
\end{equation}
Note that $ \End(V) $ can be considered as a Lie algebra $ \tildeg = \gl_{2 n}(\C) $ with the adjoint action.
The action is somewhat twisted by $ \sigma $, but actually, as a representation, the space $ \tildeL $ is isomorphic to 
$ V \oplus V^{\ast} \oplus V \otimes V^{\ast} $ in usual sense.
The space $ \tildeL $ is our enhanced Lie algebra.

We define an involutive automorphism of a vector space $ \tildeL $ as follows, and 
we will denote it by the same letter $ \sigma $.  
Namely, for $ X = (u, \transpose{v}) + A \in \tildeL $, we put 
\begin{equation}
\sigma(X) = \sigma((u, \transpose{v})) + \sigma(A) 
:= (v , \transpose{u}) + J^{-1} \transpose{A} J .
\end{equation}
Since this involution is a natural extension from an anti-automorphism of the matrix algebra $ \End(V) $, 
there should be no confusion to use the same letter.  
However, if there is a possibility of confusion, we will use $ \sigma_{\tildeG} $ or $ \sigma_{\tildeL} $ depending on the situation.
So we have $ \sigma_{\tildeL}^2 = \id_{\tildeL} $, and note that $ \sigma_{\tildeL} $ is an automorphism of a \emph{vector space}.

\begin{lemma}
\label{lemma:sigma.twists.tildeG.action}
For $ g \in \tildeG $ and $ X \in \tildeL $, 
we have 
$ \sigma(g \cdot X) = \sigma(g)^{-1} \cdot \sigma(X) $.
\end{lemma}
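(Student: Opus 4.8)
The plan is to verify the identity $\sigma(g\cdot X)=\sigma(g)^{-1}\cdot\sigma(X)$ componentwise, treating separately the vector part in $V$, the covector part in $V^{\ast}$, and the matrix part in $\End(V)$. Write $X=(u,\transpose{v})+A$, so that by definition $g\cdot X=(gu,\transpose{(\sigma(g)^{-1}v)})+gAg^{-1}$ and $\sigma(X)=(v,\transpose{u})+\sigma(A)$ with $\sigma(A)=J^{-1}\transpose{A}J$. Applying $\sigma_{\tildeL}$ to $g\cdot X$ swaps the $V$ and $V^{\ast}$ components (and transposes the matrix part), while applying $\sigma(g)^{-1}=\theta(g)$ to $\sigma(X)$ acts in the twisted way described above. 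So the whole identity reduces to three elementary equalities which I will check one at a time.

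First I would do the matrix part, which is the cleanest: the $\End(V)$-component of $\sigma(g\cdot X)$ is $\sigma(gAg^{-1})=J^{-1}\transpose{(g^{-1})}\transpose{A}\transpose{g}J$, whereas the $\End(V)$-component of $\sigma(g)^{-1}\cdot\sigma(X)$ is $\Ad(\sigma(g)^{-1})\sigma(A)=\sigma(g)^{-1}J^{-1}\transpose{A}J\,\sigma(g)$. These agree precisely because $\sigma(g)^{-1}=J^{-1}\transpose{(g^{-1})}J$ (equivalently $\sigma(g)=J^{-1}\transpose{g}J$), so $\sigma(g)^{-1}J^{-1}=J^{-1}\transpose{(g^{-1})}$ and $J\,\sigma(g)=\transpose{g}J$. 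This is just the statement that $\sigma$ restricted to $\End(V)$ is an anti-automorphism, which was already built into its definition.

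Next the two vector parts. The $V$-component of $\sigma(g\cdot X)$: applying $\sigma_{\tildeL}$ sends the $V^{\ast}$-piece $\transpose{(\sigma(g)^{-1}v)}$ of $g\cdot X$ back to $V$, giving the vector $\sigma(g)^{-1}v$; the $V$-component of $\sigma(g)^{-1}\cdot\sigma(X)$ is $\sigma(g)^{-1}$ applied to the $V$-component $v$ of $\sigma(X)$, i.e. again $\sigma(g)^{-1}v$ — they match tautologically. For the $V^{\ast}$-component: $\sigma_{\tildeL}$ sends the $V$-piece $gu$ of $g\cdot X$ to the covector $\transpose{(gu)}$; on the other side, the $V^{\ast}$-component of $\sigma(g)^{-1}\cdot\sigma(X)$ is $\transpose{(\theta(\sigma(g)^{-1})\,u)}=\transpose{(\sigma(\sigma(g)^{-1})^{-1}u)}$, and since $\sigma$ is involutive this is $\transpose{(\sigma(g)^{-1})^{-1}u)}=\transpose{(\sigma(g)u)}$. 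Hmm — so I need $\transpose{(gu)}=\transpose{(\sigma(g)u)}$, which is false in general; this signals that the correct covariant transformation rule for the $V^{\ast}$-factor under $h\in\tildeG$ is $\transpose{w}\mapsto\transpose{(\theta(h)w)}=\transpose{(\sigma(h)^{-1}w)}$, so with $h=\sigma(g)^{-1}$ one gets $\transpose{(\sigma(\sigma(g)^{-1})^{-1}u)}=\transpose{(\sigma(g)u)}$ — I should recompute carefully, because $\theta(\sigma(g)^{-1})=\sigma(\sigma(g)^{-1})^{-1}=(g^{-1})^{-1}=g$ using $\theta=\sigma(\cdot)^{-1}$ and $\sigma^2=\id$, which does give $\transpose{(gu)}$ as wanted. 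The main obstacle is thus purely bookkeeping: keeping straight the three-way interplay among $\sigma$, $\theta(g)=\sigma(g)^{-1}$, the inverse, and the transpose on $V^{\ast}$, and in particular using the involutivity $\sigma^2=\id$ at exactly the right moment so that $\theta(\theta(g))=g$. Once the conventions are pinned down, each of the three verifications is a one-line matrix identity, and the lemma follows by adding the three components.
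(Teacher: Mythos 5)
Your proof is correct and is essentially the paper's own argument: a direct componentwise verification in which the $\End(V)$-part follows from $\sigma$ being an involutive anti-automorphism (so $\sigma(gAg^{-1})=\sigma(g)^{-1}\sigma(A)\sigma(g)$) and the $V^{\ast}$-part from $\theta(\sigma(g)^{-1})=\sigma(\sigma(g)^{-1})^{-1}=(g^{-1})^{-1}=g$. The false start in the covector computation is corrected within your own write-up, so nothing is missing.
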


\begin{proof}
This is a consequence of the following calculation:
\begin{align*}
\sigma(g \cdot X) 
&= \sigma((g u, \transpose{(\sigma(g)^{-1} v)})) + \sigma(g A g^{-1}) 
\\
&= (\sigma(g)^{-1} v, \transpose{(g u)}) + \sigma(g)^{-1} \sigma(A) \sigma(g) 
\\
&= \sigma(g)^{-1} \cdot \bigl( (v, \transpose{u}) + \sigma(A) \bigr) 
= \sigma(g)^{-1} \cdot \sigma(X) .
\end{align*}
\end{proof}

Let us consider another automorphism $ \alpha \in \Aut_{\tildeG}(\tildeL) $ of $ \tildeL $ which is $ \tildeG $-equivariant.  
In the present case, $ \alpha $ should be non-zero constant on each irreducible components of $ \tildeL $, so 
there is not much possibility.  In the application, we will take $ \alpha $ as $ -1 $ or $ 1 $.
Put
\begin{equation}
 L := \{ X \in \tildeL \mid \sigma(X) = \alpha(X) \}.
\end{equation}

\begin{lemma}
The subspace $ L $ is stable under the action of $ G $.
\end{lemma}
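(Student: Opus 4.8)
The plan is to verify directly that $L$ is closed under the $G$-action by combining Lemma~\ref{lemma:sigma.twists.tildeG.action} with the fact that $\alpha$ is $\tildeG$-equivariant and that elements of $G$ are fixed points of $\theta$. First I would take an arbitrary $X \in L$, so that $\sigma(X) = \alpha(X)$, and an arbitrary $g \in G = G^{\theta}$, and compute $\sigma(g \cdot X)$. By Lemma~\ref{lemma:sigma.twists.tildeG.action} we have $\sigma(g \cdot X) = \sigma(g)^{-1} \cdot \sigma(X)$. Now the key point is that $g \in G$ means $\sigma(g) = g^{-1}$, equivalently $\sigma(g)^{-1} = g$, so $\sigma(g \cdot X) = g \cdot \sigma(X)$.

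Next I would substitute $\sigma(X) = \alpha(X)$ and use that $\alpha \in \Aut_{\tildeG}(\tildeL)$ is $\tildeG$-equivariant, hence in particular $G$-equivariant: $g \cdot \sigma(X) = g \cdot \alpha(X) = \alpha(g \cdot X)$. Chaining these equalities gives $\sigma(g \cdot X) = \alpha(g \cdot X)$, which is exactly the condition for $g \cdot X$ to lie in $L$. Since $g \in G$ and $X \in L$ were arbitrary, $G \cdot L \subseteq L$, i.e., $L$ is stable under $G$.

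I do not anticipate a genuine obstacle here; the statement is essentially a bookkeeping consequence of the preceding lemma once one notices that the ``twist'' $\sigma(g)^{-1}$ appearing there becomes the identity twist precisely on the subgroup $G$ of $\sigma$-unitary (symplectic) elements. The only point requiring a word of care is the interplay between $\sigma$ acting as an involution of the \emph{vector space} $\tildeL$ and $\sigma$ acting as an anti-automorphism of $\tildeG$ — but Lemma~\ref{lemma:sigma.twists.tildeG.action} has already packaged this interaction correctly, and the definition of $L$ uses only $\sigma_{\tildeL}$ and the linear automorphism $\alpha$, both of which commute appropriately with the $G$-action by the argument above. One should also note that $L$ is automatically a linear subspace (being the $\alpha$-eigenspace-type locus of the two commuting linear involutions/automorphisms $\sigma_{\tildeL}$ and $\alpha$), so ``subspace'' in the statement is justified and the $G$-stability is the only substantive claim.
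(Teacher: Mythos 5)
Your proof is correct and is essentially identical to the paper's own argument: both apply Lemma~\ref{lemma:sigma.twists.tildeG.action} to get $\sigma(g\cdot X)=\sigma(g)^{-1}\cdot\sigma(X)=g\cdot\sigma(X)$ for $g\in G$, then use the $\tildeG$-equivariance of $\alpha$ to conclude $\sigma(g\cdot X)=\alpha(g\cdot X)$. Nothing to add.
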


\begin{proof}
Take $ X \in L $ so that $ \sigma(X) = \alpha(X) $ holds.  
For $ g \in G $, we have
\begin{align*}
\sigma(g \cdot X) &= \sigma(g)^{-1} \cdot \sigma(X) = g \cdot \sigma(X) 
\intertext{by Lemma~\ref{lemma:sigma.twists.tildeG.action}, and }
\alpha(g \cdot X) &= g \cdot \alpha(X) ,
\end{align*}
since $ \alpha $ is $ \tildeG $-equivariant.  
Thus we get $ \sigma(g \cdot X) = \alpha(g \cdot X) $ which means $ g \cdot X \in L $.
\end{proof}

We are interested in the comparison of two orbit spaces $ L/G $ and $ \tildeL / \tildeG $.  
Before proceeding further, it will be helpful to sketch what is our $ L $.  
We have a decomposition $ \tildeg = \lie{g} \oplus \lie{p} $ according to the eigenvalue $ \pm 1 $ of $ \theta $ (polar decomposition).  
Then, if we take $ \alpha = -1 $, we get
\begin{equation}
L = \{ X = (u, - \transpose{u}) + A \mid A \in \lie{g} \} \simeq V \oplus \lie{sp}(V) .
\end{equation}
This is an enhanced Lie algebra for $ G $.  
On the other hand, if we take $ \alpha = 1 $ (identity map), then we get 
\begin{equation}
L = \{ X = (u, \transpose{u}) + A \mid A \in \lie{p} \} \simeq V \oplus \wedge^2 V
\end{equation}
and this is an enhanced $ \theta $-representation (cf.~\cite{Vinberg.1976}, \cite{Graaf.Vinberg.Yakimova.2012}), 
or an exotic symplectic Lie algebra (\cite{Kato.2009}).  

Now we present a main theorem under the setting above.

\begin{theorem}
\label{theorem:orbit.embedding.for.enhanced.Lie.algebra}
The natural correspondence of orbits of $ G $ and $ \tildeG $ given by 
\begin{equation*}
L/G \ni \orbit_G \longmapsto \tildeG \cdot \orbit_G \in \tildeL / \tildeG 
\end{equation*}
induces an injection.  
In other words, 
for any $ \tildeG $-orbit $ \orbit_{\tildeG} \subset \tildeL $, the intersection 
$ L \cap \orbit_{\tildeG} $ is either empty or a single $ G $-orbit.
This injection takes closed $ G $-orbits to closed $ \tildeG $-orbits.
\end{theorem}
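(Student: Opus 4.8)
The plan is to verify the abstract hypotheses of Ohta's method (as recalled in the paper's main theorem section) for the concrete pair $(G,\tildeG)=(\Sp_{2n},\GL_{2n})$ acting on $(L,\tildeL)$. The key observation is Lemma~\ref{lemma:sigma.twists.tildeG.action}: the anti-automorphism $\sigma$ of $\tildeG$ and the vector-space involution $\sigma_{\tildeL}$ are compatible in the sense $\sigma(g\cdot X)=\sigma(g)^{-1}\cdot\sigma(X)$, and $L$ is precisely the $\alpha$-fixed subspace of $\sigma_{\tildeL}$. So the first step is to show that if $u,u'\in L$ lie in the same $\tildeG$-orbit, say $u'=g\cdot u$ with $g\in\tildeG$, then $u'=h\cdot u$ for some $h\in G=\Sp_{2n}$. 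From $\sigma_{\tildeL}(u)=\alpha(u)$ and $\sigma_{\tildeL}(u')=\alpha(u')$ together with the twisting lemma and $\tildeG$-equivariance of $\alpha$, one computes $\sigma(g)^{-1}\cdot u = \sigma(g)^{-1}\cdot\sigma_{\tildeL}(u)\cdot(\text{up to }\alpha) = \sigma_{\tildeL}(g\cdot u) \cdot(\cdots) = \sigma_{\tildeL}(u')\cdot(\cdots)= u' = g\cdot u$, so $g^{-1}\sigma(g)$ stabilizes $u$ in $\tildeG$; write $s=g^{-1}\sigma(g)=\sigma(g)g^{-1}\cdots$ — the precise form matters, and getting the cocycle identity $\theta(s)=s^{-1}$ or $\sigma(s)=s$ straight is the crux. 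The element $s$ lies in the stabilizer $\tildeG_u$ and satisfies a $\theta$-twisted symmetry; the goal is to correct $g$ by an element of $\tildeG_u$ so that the corrected group element lies in $G^\theta$.

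The main obstacle — and this is exactly the point where Ohta's conditions enter — is that this correction requires a square-root / $\theta$-splitting statement inside $\tildeG_u$: one needs $s$ (or a conjugate of it) to be of the form $g_1\theta(g_1)^{-1}$ for some $g_1\in\tildeG_u$, so that $h=g_1^{-1}g$ (or similar) lies in $G$ and still moves $u$ to $u'$. This is where the two hypotheses Ohta isolates come in: (i) a cohomological condition ensuring $H^1$-type triviality, which here reduces to the fact that for $\GL$-type stabilizers every element $s$ with $\sigma(s)=s$ (a ``symmetric'' unit) is a norm $\sigma(g_1)g_1$ — essentially the surjectivity of the norm map $g\mapsto\sigma(g)g$ on the relevant stabilizer, a classical fact for general linear groups over $\C$; and (ii) the condition replacing Ohta's ambient-space hypothesis, which I would check by exhibiting the stabilizer $\tildeG_u$ as a product of $\GL$'s (and possibly $\Sp$'s or $\OO$'s) compatibly with $\sigma$, using the explicit description of $\GL_{2n}$-orbits on the enhanced Lie algebra $V\oplus V^\ast\oplus\gl_{2n}$ — these orbits are classified by combinatorial data (a nilpotent orbit in $\gl_{2n}$ together with a ``cyclic vector'' pattern), and for each such datum the stabilizer is a known reductive-times-unipotent group whose reductive part is a product of classical groups.

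For the \emph{closedness} statement, the plan is to use the Hilbert--Mumford / Kempf--Rousseau theory: a $G$-orbit $\orbit_G$ is closed iff every $\C^\times$-limit $\lim_{t\to 0}\lambda(t)\cdot u$ that exists already lies in $\orbit_G$. Given a closed $G$-orbit $G\cdot u\subset L$, I would show $\tildeG\cdot u$ is closed by the following argument: if it were not closed, there is a one-parameter subgroup $\tilde\lambda$ of $\tildeG$ with $u_0=\lim_{t\to0}\tilde\lambda(t)\cdot u$ in a strictly smaller $\tildeG$-orbit; one then uses the $\sigma$-structure to replace $\tilde\lambda$ by a $\theta$-split (equivalently, $\sigma$-``symmetric'') one-parameter subgroup — by averaging $\tilde\lambda$ against $\sigma$ (possible since any two maximal $\sigma$-split tori are conjugate, or by a direct Mostow-type argument using that $\C^\times$ is connected abelian) — obtaining a $1$-PS $\lambda$ of $G$ with the same (or a conjugate) limit; this limit then lies in $L$ by continuity and $\sigma$-equivariance, contradicting closedness of $G\cdot u$ in $L$. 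The delicate point here is ensuring the replacement $1$-PS has a limit lying in the intersection $L\cap\overline{\tildeG\cdot u}$ and not merely in $\overline{\tildeG\cdot u}$; this uses that $L$ is a linear subspace cut out by the linear involution $\sigma_{\tildeL}-\alpha$, so limits along $\sigma$-compatible $1$-PS's stay in $L$. I expect the norm-surjectivity step (i) above to be the genuine technical heart; everything else is bookkeeping with the explicit $\GL_{2n}$-orbit classification.
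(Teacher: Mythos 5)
Your opening reduction is the same as the paper's: from $X,Y\in L\cap\orbit_{\tildeG}$ with $Y=g\cdot X$ one uses Lemma~\ref{lemma:sigma.twists.tildeG.action} to conclude that $h:=\sigma(g)g$ lies in $\tildeG_X$ and satisfies $\sigma(h)=h$, and the whole problem is to produce $f\in\tildeG_X$ with $f^2=h$ and $\sigma(f)=f$, so that $gf^{-1}\in G$ still carries $X$ to $Y$. You correctly identify this as the technical heart. But the way you propose to supply it is where the gap lies. The paper does \emph{not} classify the $\tildeG$-orbits on $\tildeL$ or compute stabilizers: it takes $f=f(h)$ to be a \emph{polynomial} in $h$ giving a square root of the invertible matrix $h$ (Hermite interpolation at the spectrum, normalized so that $f(1)=1$). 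Because $f(h)$ is a polynomial in $h$, it automatically commutes with every $A$ commuting with $h$, fixes every vector fixed by $h$ (this is why one needs $f(1)=1$), and — crucially — satisfies $\sigma(f(h))=f(\sigma(h))=f(h)$, since $\sigma$ here is the restriction of a linear anti-automorphism of the associative algebra $\End(V)$. That last structural point is exactly what makes Assumption~\ref{assumption:antisymmetric.square.in.Gx} hold in this example, and it is what your plan misses. Your substitute — an explicit orbit classification of $V\oplus V^{\ast}\oplus\gl_{2n}(\C)$ under $\GL_{2n}(\C)$ plus a case-by-case ``norm surjectivity'' check on the stabilizers — is both much harder than needed (the full enhanced Lie algebra has continuous spectral parameters; the ``nilpotent orbit plus cyclic vector pattern'' combinatorics you cite describes only the enhanced nilpotent cone) and, more seriously, unsubstantiated as stated: the required square-root property is \emph{not} a formal consequence of the stabilizer being built from general linear groups. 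The paper's Section~5 exhibits stabilizers inside $\GL_2(\C)$ (e.g. $\tildeG_x=\{\alpha+\beta N\}$ for a regular unipotent $x$) for which a $\sigma$-fixed element of $\tildeG_x$ admits no $\sigma$-fixed square root in $\tildeG_x$, and this failure is precisely the obstruction to orbit embedding there. So ``a classical fact for general linear groups over $\C$'' cannot carry the argument; one must exploit that $\sigma$ extends to the matrix algebra, which is the polynomial trick.

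For the closedness statement your route also diverges from the paper's and is not completed. The paper invokes Luna's criterion for the reductive subgroup $H=\langle\theta\rangle$ inside $\langle\theta\rangle\ltimes\tildeG$ acting on $\tildeL$, with $\tildeL^{H}=L$ and $Z(H)\supset G$: closedness of $\orbit_G$ is then directly equivalent to closedness of $\tildeG\cdot\orbit_G$, with no one-parameter subgroups needed. Your Hilbert--Mumford plan hinges on replacing a destabilizing one-parameter subgroup of $\tildeG$ by a $\theta$-split one with the same limit orbit; that replacement is essentially the content of the Kempf--Rousseau optimality/uniqueness theory (or of Luna's criterion itself) and is asserted rather than proved — ``averaging $\tilde\lambda$ against $\sigma$'' does not by itself produce a cocharacter whose limit lands in the unique closed orbit of $\closure{\tildeG\cdot u}$. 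In short: the skeleton of the injectivity argument matches the paper, but the two load-bearing steps (the $\sigma$-fixed square root in the stabilizer, and the closed-orbit transfer) are left to machinery that is either not applicable as quoted or not actually deployed.
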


\begin{proof}
Take $ X, Y \in \orbit_{\tildeG} \cap L $, so that 
$ X, Y \in L $ and $ Y = g \cdot X $ for some $ g \in \tildeG $.  
We want to show $ X $ and $ Y $ are conjugate under the action of $ G $.  
Let us begin with
\begin{align*}
\alpha(g \cdot X) 
&= \sigma( g \cdot X) = \sigma(g)^{-1} \cdot \sigma(X) 
\\
&= \sigma(g)^{-1} \cdot \alpha(X) = \alpha(\sigma(g)^{-1} \cdot X ) , 
\end{align*}
hence we get 
$ g \cdot X = \sigma(g)^{-1} \cdot X $.  
This means 
$ (\sigma(g) g) \cdot X = X $ and we conclude that 
$ \sigma(g) g \in \tildeG_X $ (a stabilizer of $ X $ in $ \tildeG $).
We put $ h := \sigma(g) g $.  
Since $ h \cdot X = X $, we get
\begin{equation*}
X = (u, \transpose{v}) + A 
= h \cdot X 
= (h u, \transpose{(\sigma(h)^{-1} v)}) + h A h^{-1}
= (h u, \transpose{(h^{-1} v)}) + h A h^{-1}.
\end{equation*}
Thus we have 
\begin{equation*}
\sigma(h) = h ; \quad
h u = u, \quad
h v = v, \quad
h A = A h.
\end{equation*}
A routine elementary method in linear algebra tells that 
there exists a polynomial $ f(T) \in \C[T] $ such that 
\begin{align*}
&
h = f(h)^2 , \quad
\sigma(f(h)) = f(h), 
\\
&
f(h) u = u, \quad
f(h) v = v, \quad
f(h) A = A f(h).
\end{align*}
Since $ f(h) $ is regular, it belongs to $ \tildeG_X $.  
We will simply write $ f = f(h) \in \tildeG_X $ below.

Since $ \sigma(f) f = f^2 = h = \sigma(g) g $, we get 
\begin{equation*}
\sigma(f g^{-1}) f g^{-1} 
= \sigma(g)^{-1} \sigma(f) f g^{-1} 
= \sigma(g)^{-1} \sigma(g) g g^{-1} 
= e.
\end{equation*}
This means $ f g^{-1} \in G $ or $ g f^{-1} \in G $.  
Note that $ f^{-1} \in \tildeG_X $ because $ f \in \tildeG_X $.  
Now we finish the proof to see 
\begin{equation*}
Y = g \cdot X = (g f^{-1}) \cdot X \quad 
\text{ and } \quad 
g f^{-1} \in G, 
\end{equation*}
hence $ X $ and $ Y $ are in the same $ G $-orbit.

Let us show that if $ \orbit_G \subset L $ is a closed $ G $-orbit, 
then $ \orbit_{\tildeG} = \tildeG \cdot \orbit_G \subset \tildeL $ is also closed.  
For this, we follow the same arguments as in \cite[Theorem 8]{Ohta.2008}.  
Namely, we use the following criterion of Luna 
(\cite[\S\S~2.1 \& 3.1]{Luna.1975}).  

\medskip

\noindent
\textbf{Luna's Criterion:} 
\textit{Suppose that a reductive group $ G $ acts on an affine variety $ X $ and that 
$ H $ is a reductive subgroup of $ G $.  
Put $ X^H := \{ x \in X \mid h \cdot x = x \} $.  
{\upshape (i)} The natural morphism $ X^H \git N_G(H) \to X \git G $ is finite, 
where $ X \git G = \Spec(\C[X]^G) $ denotes an affine categorical quotient.  
{\upshape (ii)} For any $ x \in X^H $, a $ G $-orbit $ G \cdot x $ is closed 
if and only if $ N_G(H) \cdot x $ is closed.
}

\medskip

In this criterion, we can replace the normalizer $ N_G(H) $ by the centralizer $ Z_G(H) $.
For this, see \cite[\S~1.9]{Luna.1975} 
(cf. \cite[Theorems 6.16 \& 6.17]{Popov.Vinberg.1994} and Remark after Corollary 2).  

In Luna's Criterion, let us take 
$ G \text{ (Luna's)} = \langle \theta \rangle \ltimes \tildeG, \;
X = \tildeL, \; 
H = \langle \theta \rangle \simeq \Z_2 $.  
The action of $ \tildeG $ on $ \tildeL $ is already given, 
and $ \theta $ acts on $ \tildeL $ by 
$ \theta (x) = \alpha^{-1} \sigma_{\tildeL}(x) $ 
for $ x \in \tildeL $.  
Then it is immediate to check that 
$ X^H = L $ and $ Z_G(H) =  \langle \theta \rangle \times G \text{ (ours)}$.

Now Luna's Criterion (ii) tells that 
$ \langle \theta \rangle \cdot \orbit_G $ is closed 
if and only if $ (\langle \theta \rangle \ltimes \tildeG) \cdot \orbit_G $ is closed.  
Since $ \theta $ normalizes $ \tildeG $ and fixes $ \orbit_G $ point wise, 
we know $ \langle \theta \rangle \cdot \orbit_G = \orbit_G $ and 
$ (\langle \theta \rangle \ltimes \tildeG) \cdot \orbit_G = \tildeG \cdot \orbit_G $.  
At the same time, we also see that the natural morphism 
$ L \git G \to \tildeL \git (\langle \theta \rangle \ltimes \tildeG) $ is finite by Criterion (i).
\end{proof}

\section{Orbit embedding in general}
\label{setion:main.theorem}

We extract general conditions from Section~\ref{section:enhanced.Lie.algebra}, which assures orbit embedding.  
Thus, let us consider a general action of an algebraic group $ \tildeG $ on a variety $ \tildeL $, 
which is no more a linear action.  
Let $ \sigma = \sigma_{\tildeG} $ be an anti-automorphism such that $ \sigma_{\tildeG}^2 = \id_{\tildeG} $, and 
assume that there exists an involutive automorphism $ \sigma = \sigma_{\tildeL} \in \Aut(\tildeL) $ which satisfies 
\begin{equation}
\sigma(g \cdot x) = \sigma(g)^{-1} \sigma(x) \qquad
(g \in \tildeG, \; x \in \tildeL).
\end{equation}
Note that $ \theta(g) := \sigma(g)^{-1} $ defines an involutive automorphism of $ \tildeG $.  
We put 
\begin{align*}
G &= \{ g \in \tildeG \mid \sigma(g) = g^{-1} \} = (\tildeG)^{\theta} ,
\\
L &= \{ x \in \tildeL \mid \sigma(x) = x \} \subset \tildeL .
\end{align*}
Then $ G $ is a symmetric subgroup in $ \tildeG $ and $ L $ is a closed subvariety of $ \tildeL $ stable under $ G $.

We assume the following

\begin{assumption}
\label{assumption:antisymmetric.square.in.Gx}
For any $ x \in L $ and $ h \in \tildeG_x $ such that $ \sigma(h) = h $, 
there exists $ f \in \tildeG_x $ which satisfies $ h = f^2 $ and $ \sigma(f) = f $.  
\end{assumption}

\begin{theorem}
\label{theorem:main.theorem.abstract.version}
Under Assumption~{\upshape\ref{assumption:antisymmetric.square.in.Gx}}, 
the following {\upshape (1) -- (3)} hold.
\begin{thmenumerate}
\item
The natural map $ L/G \to \tildeL / \tildeG $ defined by 
$ \orbit_G \mapsto \tildeG \cdot \orbit_G \; (\orbit_G \in L/G) $ induces an injection 
$ L/G \inclusion \tildeL / \tildeG $.  
In other words, 
for any $ \tildeG $-orbit $ \orbit_{\tildeG} \subset \tildeL $, the intersection 
$ L \cap \orbit_{\tildeG} $ is either empty or a single $ G $-orbit.
\item
If $ \tildeL $ is an affine variety, 
then the injection $ L/G \to \tildeL / \tildeG $ 
takes closed $ G $-orbits to closed 
$ \tildeG $-orbits.
\item
If $ \tildeL $ is an affine variety, 
the ring of invariants $ \C[L]^G $ is integral over the restriction of the invariant ring 
$ \C[\tildeL]^{\tildeG}\restrict_L $ to $ L $ and they have the same quotient field.
\end{thmenumerate}
\end{theorem}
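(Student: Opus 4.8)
The plan is to prove the three assertions of Theorem~\ref{theorem:main.theorem.abstract.version} by imitating, in the abstract setting, the argument already carried out in the proof of Theorem~\ref{theorem:orbit.embedding.for.enhanced.Lie.algebra}, with the linear-algebra fact about square roots replaced precisely by Assumption~\ref{assumption:antisymmetric.square.in.Gx}, and with the final two parts reduced to Luna's Criterion.

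For part (1): take $ X, Y \in L $ lying in the same $ \tildeG $-orbit, say $ Y = g \cdot X $ with $ g \in \tildeG $. Apply $ \sigma = \sigma_{\tildeL} $ to $ Y = g\cdot X $ and use the compatibility identity $ \sigma(g\cdot X) = \sigma(g)^{-1}\cdot\sigma(X) $ together with $ \sigma(X) = X $, $ \sigma(Y) = Y $; this forces $ g\cdot X = \sigma(g)^{-1}\cdot X $, i.e. $ h := \sigma(g) g \in \tildeG_X $, and $ \sigma(h) = \sigma(g)\sigma(\sigma(g)) = \sigma(g) g = h $ since $ \sigma_{\tildeG}^2 = \id $. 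Now invoke Assumption~\ref{assumption:antisymmetric.square.in.Gx} to produce $ f \in \tildeG_X $ with $ f^2 = h $ and $ \sigma(f) = f $. Then a one-line computation gives $ \sigma(fg^{-1})(fg^{-1}) = \sigma(g)^{-1}\sigma(f)f g^{-1} = \sigma(g)^{-1} h g^{-1} = \sigma(g)^{-1}\sigma(g) g g^{-1} = e $, so $ fg^{-1} \in G $, hence $ gf^{-1} \in G $. Since $ f \in \tildeG_X $ we have $ f^{-1}\cdot X = X $, so $ Y = g\cdot X = (gf^{-1})\cdot X $ with $ gf^{-1}\in G $, proving $ X, Y $ lie in the same $ G $-orbit. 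This is word-for-word the earlier argument, and I expect it to be routine once Assumption~\ref{assumption:antisymmetric.square.in.Gx} is in hand; the only point requiring a little care is checking $ \sigma(h) = h $, which uses involutivity of $ \sigma_{\tildeG} $ and the anti-automorphism property.

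For parts (2) and (3), assume $ \tildeL $ is affine and apply Luna's Criterion exactly as in the proof of Theorem~\ref{theorem:orbit.embedding.for.enhanced.Lie.algebra}: set (in Luna's notation) $ G_{\mathrm{Luna}} = \langle\theta\rangle \ltimes \tildeG $, $ X = \tildeL $, and $ H = \langle\theta\rangle \simeq \Z_2 $, where $ \theta $ acts on $ \tildeL $ by $ \theta(x) = \sigma_{\tildeL}(x) $ (here there is no $ \alpha $, so this is simply $ \sigma_{\tildeL} $). One checks that $ \tildeL^H = L $ and that the centralizer $ Z_{G_{\mathrm{Luna}}}(H) = \langle\theta\rangle \times G $. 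Then Luna's Criterion (ii), with the normalizer replaced by the centralizer, says a $ G_{\mathrm{Luna}} $-orbit through a point of $ L $ is closed iff its $ Z_{G_{\mathrm{Luna}}}(H) $-orbit is closed; since $ \theta $ fixes $ \orbit_G \subset L $ pointwise, $ Z_{G_{\mathrm{Luna}}}(H)\cdot\orbit_G = \orbit_G $ and $ G_{\mathrm{Luna}}\cdot\orbit_G = \tildeG\cdot\orbit_G $, giving (2). For (3), Luna's Criterion (i) gives that the natural morphism $ L\git G \to \tildeL\git(\langle\theta\rangle\ltimes\tildeG) $ is finite; unwinding the quotients, $ \C[L]^G $ is a finite (hence integral) module over the image of $ \C[\tildeL]^{\langle\theta\rangle\ltimes\tildeG} $, which is contained in the restriction $ \C[\tildeL]^{\tildeG}\restrict_L $; one then notes that finiteness forces equality of quotient fields because part (1) says $ L/G \to \tildeL/\tildeG $ is injective, so the morphism of quotient varieties is birational onto its image.

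The main obstacle I anticipate is purely bookkeeping rather than conceptual: verifying that $ \tildeG $ is reductive and $ \langle\theta\rangle $ is a reductive subgroup of $ \langle\theta\rangle\ltimes\tildeG $ so that Luna's Criterion applies (this presumably needs an implicit standing hypothesis that $ \tildeG $ is reductive, which should be added or is understood), and carefully identifying $ \tildeL^H $ with $ L $ and the centralizer with $ \langle\theta\rangle\times G $ — the identification $ \tildeL^{\langle\theta\rangle} = L $ is immediate from the definition $ L = \{x : \sigma_{\tildeL}(x) = x\} $, but one must make sure the semidirect-product structure is set up consistently with the twisting identity $ \sigma_{\tildeL}(g\cdot x) = \sigma(g)^{-1}\sigma_{\tildeL}(x) $. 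For part (3), the slightly delicate point is the claim about the quotient field: finiteness of $ L\git G \to \tildeL\git(\langle\theta\rangle\ltimes\tildeG) $ alone gives an integral extension of coordinate rings but not automatically equality of fraction fields, so one genuinely uses injectivity from (1) (a single $ \tildeG $-orbit meets $ L $ in at most one $ G $-orbit) to conclude that the dominant finite morphism of affine quotients is generically one-to-one, hence birational.
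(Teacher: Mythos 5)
Your proposal is correct and follows essentially the same route as the paper: part (1) by the same square-root argument with Assumption~\ref{assumption:antisymmetric.square.in.Gx} supplying $f$, and parts (2)--(3) via Luna's Criterion with $G_{\mathrm{Luna}}=\langle\theta\rangle\ltimes\tildeG$, $H=\langle\theta\rangle$, $\tildeL^H=L$, $Z_{G_{\mathrm{Luna}}}(H)=\langle\theta\rangle\times G$, and birationality of $L\git G\to(\closure{\tildeG\cdot L})\git\tildeG$ from the injectivity of the orbit map together with (2). Your side remarks (the implicit reductivity of $\tildeG$ needed for Luna's Criterion, and that finiteness alone does not give equality of fraction fields) are accurate and consistent with how the paper argues.
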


\begin{remark}
In \S~\ref{section:enhanced.Lie.algebra}, 
we use an extra automorphism $ \alpha \in \Aut_{\tildeG}(\tildeL) $.  
To recover the situation in \S~\ref{section:enhanced.Lie.algebra}, 
it is enough to consider 
$ \alpha^{-1} \sigma_{\tildeL} $ instead of $ \sigma_{\tildeL} $ here.
\end{remark}

\begin{proof}
The proof of (1) and (2) goes almost the same as that of 
Theorem~\ref{theorem:orbit.embedding.for.enhanced.Lie.algebra} 
(we use Assumption~\ref{assumption:antisymmetric.square.in.Gx} to get an element $ f \in \tildeG_x $).

Let us prove (3). 
Luna's Criterion (ii) tells that 
$ \C[L]^{\langle \theta \rangle \times G} $ 
is of finite type over 
$ \C[\tildeL]^{\langle \theta \rangle \ltimes \tildeG}\restrict_L $.  
Since $ \theta $ acts on $ L $ trivially, 
the invariant ring 
$ \C[L]^G $ coincides with 
$ \C[L]^{\langle \theta \rangle \times G} $.  
On the other hand, the ring 
$ \C[\tildeL]^{\tildeG}\restrict_L $ 
clearly contains 
$ \C[\tildeL]^{\langle \theta \rangle \ltimes \tildeG}\restrict_L $.  
Therefore 
$ \C[L]^G $ is of finite type over 
$ \C[\tildeL]^{\tildeG}\restrict_L $.  

Let us prove their quotient fields are the same.  
In general, we have 
\begin{equation*}
( \closure{\tildeG \cdot L} ) \git \tildeG \simeq \Spec( \C[\tildeL]^{\tildeG}\restrict_L) .
\end{equation*}
See \cite[Proposition 7]{Ohta.2008} for example.  
Thus the natural inclusion 
\begin{equation*}
\varphi^{\ast} : \C[\tildeL]^{\tildeG}\restrict_L
\hookrightarrow \C[L]^G
\end{equation*}
induces a dominant map 
\begin{equation*}
\varphi : L \git G \longrightarrow ( \closure{\tildeG \cdot L} ) \git \tildeG .  
\end{equation*}
The affine quotient $ L \git G $ is in bijection with the set of closed $ G $-orbits in $ L $ via 
the quotient map $ L \to L \git G $.  
Since a closed $ G $-orbit generates a closed $ \tildeG $-orbit, 
the map $ \varphi $ is generically one-to-one.  
This means $ \varphi $ is birational, hence the quotient fields are the same.
\end{proof}

Here we briefly discuss on the application to the invariant theory.  
In the setting of Section~\ref{section:enhanced.Lie.algebra}, 
the invariants of $ \tildeG = \GL(V) $ on 
$ \tildeL = V \oplus V^{\ast} \oplus \gl(V) $ is well known to experts 
(see, e.g., \cite[Theorem 0.2]{Itoh.2013}).

\begin{proposition}
\label{proposition:generators.of.invariants.for.GLV}
$ \GL(V) $-invariants 
on $ V \oplus V^{\ast} \oplus \gl(V) $ are generated by 
invariants on $ \gl(V) $ and $ \{ \Gamma_k \mid k \geq 0 \} $, where 
\begin{equation*}
\Gamma_k(v+\xi+A) = \xi(A^k v) \qquad 
(v + \xi + A \in V \oplus V^{\ast} \oplus \gl(V)).
\end{equation*}
\end{proposition}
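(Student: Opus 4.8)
The plan is to apply the classical First Fundamental Theorem of invariant theory for $ \GL(V) $ acting on several copies of the standard and dual representations, specialized to our situation. First I would identify $ \tildeL = V \oplus V^{\ast} \oplus \gl(V) $ with $ V \oplus V^{\ast} \oplus (V \otimes V^{\ast}) $ as a $ \GL(V) $-module, so that a point is a triple $ (v, \xi, A) $ with $ v \in V $, $ \xi \in V^{\ast} $, and $ A \in \Hom(V,V) $. The key structural observation is that $ \gl(V) $ contributes both a ``covariant'' slot and a ``contravariant'' slot, so that mixed contractions between $ v $, $ \xi $ and iterated applications of $ A $ are available. One then wants to show that every polynomial invariant is a polynomial in (a) the invariants of $ \GL(V) $ on $ \gl(V) $ alone, i.e. the coefficients of the characteristic polynomial of $ A $ (equivalently $ \trace(A^k) $), and (b) the functions $ \Gamma_k(v+\xi+A) = \xi(A^k v) $.

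The main technical step is to invoke the polarization/FFT machinery. I would treat $ v $ as a single vector in $ V $, $ \xi $ as a single covector, and regard $ A $ as built out of a formal sum of rank-one tensors; by polarization it suffices to understand invariants that are multilinear in a collection of vectors $ v_1, \dots $ in $ V $ and covectors $ \xi_1, \dots $ in $ V^{\ast} $, where we then re-identify some vector--covector pairs to reconstruct $ A $. The FFT for $ \GL(V) $ states that such multilinear invariants are spanned by products of contractions $ \xi_j(v_i) $; translating this back through the partial diagonalization that produces the matrix $ A $, every such product of contractions becomes a product of traces of monomials in $ A $ together with (at most one) factor of the form $ \xi(A^k v) $, because $ v $ and $ \xi $ each appear exactly once. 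Traces of words in a single matrix $ A $ reduce to polynomials in $ \trace(A^k) $, and these are exactly the invariants on $ \gl(V) $; the remaining factor is $ \Gamma_k $.

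Concretely I would organize this as: (i) reduce to multihomogeneous invariants and then, by restitution of polarizations, to multilinear ones; (ii) apply the $ \GL(V) $ FFT to write any multilinear invariant as a linear combination of products of pairings $ \langle \xi, v \rangle $, viewing the entries of $ A $ as $ \sum \xi^{(s)} \otimes v^{(s)} $; (iii) analyze the resulting ``pairing graph'': since the $ V $-slot and $ V^{\ast} $-slot each have valence one while the $ A $-factors each contribute one incoming and one outgoing edge, the graph decomposes into one open path from $ \xi $ to $ v $ (through some number $ k $ of $ A $-edges, giving $ \Gamma_k $) together with disjoint cycles through $ A $-edges (each cycle of length $ k $ giving $ \trace(A^k) $); (iv) conclude using Newton's identities that the cyclic contributions lie in $ \C[\trace A, \trace A^2, \dots] = \C[\gl(V)]^{\GL(V)} $.

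The step I expect to be the main obstacle is (iii), the bookkeeping that shows a product of vector--covector contractions, after restituting the $ A $-polarizations, can always be reassembled into a single path $ \xi(A^k v) $ times a monomial in traces of powers of $ A $ — in particular ruling out configurations that would produce something like $ \xi(A^j v) \cdot \xi(A^l v) $ (impossible, since $ v $ and $ \xi $ occur only once) or genuinely non-cyclic words in $ A $ that fail to close up. Handling this cleanly requires being careful that restitution identifies a covector slot of one $ A $ with the vector slot of another in all possible ways, and that the only invariant combinations surviving are precisely paths and cycles; this is where one must either cite the multigraph description of $ \GL $-invariants of mixed tensors directly or reprove the relevant special case. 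Everything else — the reduction to the multilinear case and the passage from trace words to power sums — is standard and I would not belabor it.
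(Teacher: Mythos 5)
Your argument is essentially correct, but note that the paper does not actually prove this proposition: it records the statement as ``well known to experts'' and cites \cite[Theorem 0.2]{Itoh.2013}, so there is no internal proof to compare against. What you have written out is the standard derivation of that classical fact from the first fundamental theorem for $\GL(V)$ (in the form going back to the description of invariants of mixed tensors by trace monomials): identify $\gl(V)$ with $V\otimes V^{\ast}$, polarize, apply the FFT to write multilinear invariants as linear combinations of products of complete contractions, and read off from the pairing graph that closed cycles restitute to $\trace(A^k)$ while open paths from a covector slot to a vector slot restitute to $\Gamma_k$. Two small points of care. First, your step (iii) as phrased only covers invariants that are linear in $v$ and in $\xi$; a general multihomogeneous invariant must have equal degrees $d$ in $v$ and in $\xi$ (the central torus of $\GL(V)$ scales $v$ by $t$ and $\xi$ by $t^{-1}$ and fixes $\gl(V)$), and after polarizing $v$ and $\xi$ as well the graph has $d$ open paths, so restitution produces a product $\Gamma_{k_1}\cdots\Gamma_{k_d}$ rather than ``at most one'' such factor --- this is harmless for the statement about generation, but the bookkeeping should be stated for arbitrary $d$. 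Second, the polarization--restitution reduction recovers all invariants only in characteristic zero, which is the setting here. With those caveats your sketch is a complete and correct proof, and in fact more self-contained than the paper's citation.
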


Now we apply Theorem~\ref{theorem:main.theorem.abstract.version} (3) to get the following corollary.  

\begin{corollary}
\label{corollary:generators.of.invariants.for.SpV}
$ \Sp(V) $-invariants on $ V \oplus \lie{sp}(V) $ is rationally generated by 
invariants on $ \lie{sp}(V) $ and $ \{ \gamma_k \mid k \geq 1 , \text{odd} \} $, where 
\begin{equation*}
\gamma_k(v+A) = \langle v, A^k v\rangle = \transpose{v} J A^k v \qquad 
(v + A \in V \oplus \lie{sp}(V)).
\end{equation*}
\end{corollary}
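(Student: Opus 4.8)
The plan is to combine the general machinery of Theorem~\ref{theorem:main.theorem.abstract.version}(3) with the explicit description of $\GL(V)$-invariants in Proposition~\ref{proposition:generators.of.invariants.for.GLV}. Recall from Section~\ref{section:enhanced.Lie.algebra} that with the choice $\alpha = -1$ we have $L = \{(u,-\transpose{u}) + A \mid A \in \lie{sp}(V)\} \simeq V \oplus \lie{sp}(V)$, sitting inside $\tildeL = V \oplus V^{\ast} \oplus \gl(V)$ with $\tildeG = \GL(V)$ and $G = \Sp(V)$. Part~(3) of the theorem tells us that $\C[L]^G$ and the restriction $\C[\tildeL]^{\tildeG}\restrict_L$ have the same quotient field (and in fact $\C[L]^G$ is integral over that restriction). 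So it suffices to compute $\C[\tildeL]^{\tildeG}\restrict_L$ explicitly and check that the subring it generates, together with $\C[\lie{sp}(V)]^{\Sp(V)}$, is exactly the claimed one up to taking fractions.

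The key computation is to restrict the generators of Proposition~\ref{proposition:generators.of.invariants.for.GLV} to $L$. A point of $L$ has the form $v + \xi + A$ with $\xi = -\transpose{v}$ in the sense that $\xi(w) = -\transpose{v} J w = -\langle v, w\rangle$ — here one must be careful with the identification $V^{\ast} \simeq V$ used in the embedding $u \mapsto (u, -\transpose{u})$, i.e. $\xi = -\langle v, -\rangle$, and $A \in \lie{sp}(V)$. Then
\begin{equation*}
\Gamma_k(v + \xi + A)\restrict_L = \xi(A^k v) = -\langle v, A^k v\rangle = -\transpose{v} J A^k v = -\gamma_k(v+A).
\end{equation*}
So the restriction of $\Gamma_k$ is, up to sign, exactly $\gamma_k$. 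Next I would observe that $\gamma_k$ vanishes identically when $k$ is even: since $A \in \lie{sp}(V)$ means $\transpose{A} J = -J A$, one computes $\transpose{(JA^k)} = \transpose{(A^k)}\transpose{J} = (-1)^k J A^k \cdot(-1)$... more precisely $\transpose{v}JA^k v$ is a scalar, hence equals its own transpose $\transpose{v}\,\transpose{(A^k)} \transpose{J} v = (-1)^k \transpose{v} \transpose{(A^k)} J v$; using $\transpose{A}J = -JA$ repeatedly gives $\transpose{(A^k)}J = (-1)^k J A^k$, so the scalar equals $(-1)^k \cdot (-1)^k \cdots$ — carefully, $\transpose{v}JA^kv = \transpose{v}\transpose{(A^k)}\transpose{J}v = -(-1)^k\transpose{v}\transpose{(A^k)}Jv = -(-1)^k(-1)^k\transpose{v}JA^kv = -\transpose{v}JA^kv$ when $k$ is even, forcing $\gamma_k = 0$ for even $k$. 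This is precisely the vanishing Anthony Henderson pointed out in the acknowledgements. Finally, the invariants on $\gl(V)$ restricted to $\lie{sp}(V)$ give (by classical invariant theory, via the Chevalley restriction theorem or by direct comparison of traces of powers) a subring of $\C[\lie{sp}(V)]^{\Sp(V)}$ with the same quotient field; combining these facts, $\C[\tildeL]^{\tildeG}\restrict_L$ has the same fraction field as the ring generated by $\C[\lie{sp}(V)]^{\Sp(V)}$ and $\{\gamma_k \mid k \geq 1,\ \text{odd}\}$, and then Theorem~\ref{theorem:main.theorem.abstract.version}(3) transports this to $\C[L]^G = \C[V \oplus \lie{sp}(V)]^{\Sp(V)}$.

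The main obstacle I expect is not the formal invocation of part~(3) but the bookkeeping in the two "same quotient field" claims: first, verifying that the restrictions $\Gamma_k\restrict_L$ genuinely generate $\C[\tildeL]^{\tildeG}\restrict_L$ as a ring (one must make sure no generators are lost under restriction, which is automatic since restriction of a generating set generates the restricted ring), and second, checking that the $\gl(V)$-invariants restrict to something with the full fraction field of $\C[\lie{sp}(V)]^{\Sp(V)}$ — this uses that $\lie{sp}(V)$ meets the regular semisimple locus of $\gl(V)$ and that a regular semisimple element of $\lie{sp}(V)$ is determined up to $\Sp(V)$-conjugacy by its $\GL(V)$-characteristic polynomial, which holds generically. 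Once those two points are pinned down, the word "rationally" in the statement is exactly accounting for the passage to fraction fields provided by Theorem~\ref{theorem:main.theorem.abstract.version}(3), and the odd-$k$ restriction is forced by the symplectic symmetry computation above.
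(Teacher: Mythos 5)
Your proof is correct and follows essentially the same route as the paper: restrict the generators $\Gamma_k$ of Proposition~\ref{proposition:generators.of.invariants.for.GLV} to $L \simeq V \oplus \lie{sp}(V)$, invoke Theorem~\ref{theorem:main.theorem.abstract.version}(3) for the passage to fraction fields, and note the vanishing of $\gamma_k$ for even $k$. The only cosmetic differences are your sign $\Gamma_k\restrict_L = -\gamma_k$ (the paper writes $+\gamma_k$; this is immaterial, and your careful handling of the identification $V^{\ast}\simeq V$ via $J$ is in fact the right one) and your transpose computation for the even-$k$ vanishing, where the paper uses the shorter $\langle v, A^k v\rangle = (-1)^{k/2}\langle A^{k/2}v, A^{k/2}v\rangle = 0$.
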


\begin{proof}
Since 
$ \Gamma_k $ in Proposition~\ref{proposition:generators.of.invariants.for.GLV} restricted to 
$ V \oplus \lie{sp}(V) $ is $ \gamma_k $, 
they rationally generate the invariants.  
However, 
if $ k $ is even, we have 
$ \langle v, A^k v \rangle = (-1)^{k/2} \langle A^{k/2} v, A^{k/2} v\rangle = 0 $, 
which tells that $ \gamma_k $ vanishes when $ k $ is even.
\end{proof}

Note that $ V \oplus \lie{sp}(V) \simeq V \oplus S^2(V) $ as a 
representation of $ \Sp(V) $.  
Therefore the above corollary also applies to  
invariants $ \C[V \oplus S^2(V)]^{\Sp(V)} \vphantom{\Big|} $.

\section{Further example: enhanced $ \theta $-representation}
\label{sec:further.examples}

Return to the setting and notation in Section~\ref{section:enhanced.Lie.algebra}.
The symplectic space $ V $ has a standard polarization 
$ V = W^+ \oplus W^- $, where $ W^+ $ is generated by the first half standard basis 
$ e_1, \dots, e_n $ and 
$ W^- $ generated by the latter half $ f_1, \dots, f_n $, where $ f_i = e_{n + i} $.  
Let us consider a subgroup $ \tildeK $ of $ \tildeG $:
\begin{equation*}
\tildeK = \Bigl\{ \mattwo{a}{}{}{b} \mid a, b \in \GL_n(\C) \Bigr\} 
\simeq \GL(W^+) \times \GL(W^-).  
\end{equation*}
This is also a symmetric subgroup of $ \tildeG $ associated with an involution 
$ \tau(g) = I_{n,n} g I_{n,n}^{-1} $, where 
$ I_{n,n} = \mattwo{1_n}{}{}{-1_n} $.  
Since $ \tau $ and $ \sigma $ commute with each other, 
$ \tildeK $ is stable under $ \sigma $, and 
it is straightforward to see that $ \sigma(\diag(a, b)) = \diag(\transpose{b}, \transpose{a}) $.  

We can take $ \tildeK $ as $ \tildeG $ and play the same game.  
In this case, we get 
$ K = \{ g \in \tildeK \mid \sigma(g) = g^{-1} \} = (\tildeK)^{\theta} $ in place of $ G $.  
$ K $ is isomorphic to $ \GL_n(\C) $ realized as $ K = \{ \diag(a, \transpose{a}^{-1}) \mid a \in \GL_n(\C) \} $ in 
$ \Sp(V) $.  
Thus our embedding theorem tells us 

\begin{theorem}
\label{theorem:complex.symm.pair.typeA}
Under the same setting and notation as in Section~\ref{section:enhanced.Lie.algebra},
The orbit map $ L/K \to \tildeL / \tildeK $ is an embedding, 
which takes closed orbits to closed orbits.
\end{theorem}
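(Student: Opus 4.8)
The plan is to deduce this directly from Theorem~\ref{theorem:main.theorem.abstract.version}, applied with $\tildeK$ in place of $\tildeG$ and $K$ in place of $G$ (and, following the Remark, with $\sigma_{\tildeL}$ replaced by $\alpha^{-1}\sigma_{\tildeL}$ so as to absorb the auxiliary automorphism $\alpha$). First I would check that the abstract hypotheses are met. Since $\tildeK$ is stable under $\sigma = \sigma_{\tildeG}$ and $\sigma^2 = \id$, the restriction of $\sigma$ to $\tildeK$ is an involutive anti-automorphism; and Lemma~\ref{lemma:sigma.twists.tildeG.action}, holding for all $g \in \tildeG$, a fortiori gives $\sigma(g \cdot x) = \sigma(g)^{-1}\sigma(x)$ for $g \in \tildeK$ and $x \in \tildeL$. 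The symmetric subgroup attached to this data is precisely $K = (\tildeK)^{\theta}$, and the fixed-point subvariety is the very same $L \subset \tildeL$ used in Section~\ref{section:enhanced.Lie.algebra}, since $L$ depends only on $\alpha^{-1}\sigma_{\tildeL}$ and not on the ambient group.

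The one point that needs an argument is Assumption~\ref{assumption:antisymmetric.square.in.Gx} for $\tildeK$: given $x \in L$ and $h \in \tildeK_x$ with $\sigma(h) = h$, I must produce $f \in \tildeK_x$ with $h = f^2$ and $\sigma(f) = f$. Here I would reuse, essentially verbatim, the linear-algebra step from the proof of Theorem~\ref{theorem:orbit.embedding.for.enhanced.Lie.algebra}: since $h$ is invertible, there is a polynomial $f(T) \in \C[T]$ with $f(h)^2 = h$; as $\sigma$ is an anti-automorphism, $\sigma(f(h)) = f(\sigma(h)) = f(h)$; and $f(h)$, being a polynomial in $h$, commutes with everything $h$ commutes with, so $f(h) \in \tildeG_x$. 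The additional observation is that $f(h) \in \tildeK$: writing $h = \diag(a,b) \in \GL(W^+) \times \GL(W^-)$, we have $f(h) = \diag(f(a), f(b))$, again block-diagonal, hence in $\tildeK$. Therefore $f(h) \in \tildeK \cap \tildeG_x = \tildeK_x$, as required.

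Once the Assumption is verified, Theorem~\ref{theorem:main.theorem.abstract.version}(1) yields the injection $L/K \inclusion \tildeL/\tildeK$ (equivalently, each $\tildeK$-orbit meets $L$ in at most one $K$-orbit), and since $\tildeL$ is a vector space, hence affine, part~(2) shows that closed $K$-orbits map to closed $\tildeK$-orbits. All the reductivity required for Luna's Criterion is in place: $\tildeK \cong \GL_n(\C) \times \GL_n(\C)$ and $K \cong \GL_n(\C)$ are reductive, as is $\langle \theta \rangle \ltimes \tildeK$. I do not expect any real obstacle here; the content of the theorem is simply that the argument for the enhanced Lie algebra is stable under passage to the symmetric subgroup $\tildeK$, and the only step requiring a moment's care is confirming that the polynomial square root $f(h)$ stays inside $\tildeK$, which is immediate from the block-diagonal shape of its elements.
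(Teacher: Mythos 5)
Your proposal is correct and follows essentially the same route as the paper, which simply says one can ``take $\tildeK$ as $\tildeG$ and play the same game'' and then invokes the embedding theorem; the one detail you add explicitly --- that the polynomial square root $f(h)$ of a block-diagonal $h\in\tildeK_x$ is again block-diagonal, hence lies in $\tildeK_x$, so Assumption~\ref{assumption:antisymmetric.square.in.Gx} holds for $\tildeK$ --- is exactly the point the paper leaves implicit. (Only a cosmetic caveat: as in the paper's own argument, the square-root polynomial must be chosen with $f(1)=1$ so that $f(h)$ still fixes the vector components of $x$; ``commutes with everything $h$ commutes with'' alone does not give $f(h)u=u$.)
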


If we take an appropriate subspace of $ \tildeL $ (hence $ L $) stable under $ \tildeK $, 
we get two corollaries.

\begin{corollary}
\label{cor:type.CI.to.AIII.full.version}
For $ V = \C^{2n} $, 
we have an inclusion of orbits:
\begin{equation*}
\bigl( V \oplus \Sym_n(\C) \oplus \Sym_n(\C) \bigr) / \GL_n(\C) \inclusion 
\bigl( V \oplus V^{\ast} \oplus \Mat_n(\C) \oplus \Mat_n(\C) \bigr) / \GL_n(\C) \times \GL_n(\C).
\end{equation*}
\end{corollary}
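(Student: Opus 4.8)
The plan is to deduce this by restricting the orbit embedding of Theorem~\ref{theorem:complex.symm.pair.typeA} to a suitable pair of stable linear subspaces. Decompose $ \tildeg = \gl_{2n}(\C) = \tildek \oplus \lie{m} $, where $ \tildek = \Lie(\tildeK) $ is the subalgebra of block-diagonal matrices and $ \lie{m} = \tildeg/\tildek $ consists of the block-antidiagonal matrices $ \mattwo{0}{X}{Y}{0} $. Since $ \Ad(\diag(a,b)) \mattwo{0}{X}{Y}{0} = \mattwo{0}{a X b^{-1}}{b Y a^{-1}}{0} $, the space $ \lie{m} $ is $ \Ad(\tildeK) $-stable and is isomorphic, as a $ \tildeK $-module, to $ \Mat_n(\C) \oplus \Mat_n(\C) $ via $ (X, Y) \mapsto \mattwo{0}{X}{Y}{0} $. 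Hence $ \tildeL' := V \oplus V^{\ast} \oplus \lie{m} $ is a closed $ \tildeK $-stable linear subspace of $ \tildeL $, and under this identification $ \tildeL' \simeq V \oplus V^{\ast} \oplus \Mat_n(\C) \oplus \Mat_n(\C) $ as a module for $ \tildeK \simeq \GL_n(\C) \times \GL_n(\C) $, the right-hand side of the asserted inclusion.

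Next I would identify $ L' := L \cap \tildeL' $, which is automatically $ K $-stable. Taking $ \alpha = -1 $ as in Section~\ref{section:enhanced.Lie.algebra}, we have $ L = \{ (u, -\transpose{u}) + A \mid u \in V,\; A \in \lie{sp}(V) \} $, so $ L' = \{ (u, -\transpose{u}) + A \mid u \in V,\; A \in \lie{sp}(V) \cap \lie{m} \} $. As $ \lie{sp}(V) $ consists of the matrices $ \mattwo{P}{Q}{R}{-\transpose{P}} $ with $ Q, R $ symmetric, the intersection $ \lie{sp}(V) \cap \lie{m} $ equals $ \{ \mattwo{0}{Q}{R}{0} \mid Q, R \in \Sym_n(\C) \} $; one checks this is a complement of $ \lie{k} $ in $ \lie{g} = \lie{sp}(V) $, so it is $ \lie{g}/\lie{k} $, and $ K = \{ \diag(a, \transpose{a}^{-1}) \} \simeq \GL_n(\C) $ acts on it by $ Q \mapsto a Q \transpose{a} $ and $ R \mapsto \transpose{a}^{-1} R a^{-1} $. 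Therefore $ L' \simeq V \oplus \Sym_n(\C) \oplus \Sym_n(\C) $ as a $ \GL_n(\C) $-variety, the left-hand side of the asserted inclusion.

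The inclusion of orbit spaces is then formal: if $ x, y \in L' $ lie in one $ \tildeK $-orbit inside $ \tildeL' $, they lie in one $ \tildeK $-orbit inside $ \tildeL $, hence in one $ K $-orbit by Theorem~\ref{theorem:complex.symm.pair.typeA}; thus $ L'/K \to \tildeL'/\tildeK $ is injective and coincides with the restriction of the orbit embedding $ L/K \inclusion \tildeL/\tildeK $. Moreover, since $ \tildeL' $ is closed and $ \tildeK $-stable in $ \tildeL $, any $ \tildeK $-orbit meeting $ \tildeL' $ is contained in $ \tildeL' $ and is closed in $ \tildeL' $ if and only if it is closed in $ \tildeL $; likewise $ L' = L \cap \tildeL' $ is closed in $ L $. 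Hence the embedding $ L'/K \inclusion \tildeL'/\tildeK $ again carries closed orbits to closed orbits. Substituting the identifications of the first two paragraphs yields the corollary.

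Everything here is routine once Theorem~\ref{theorem:complex.symm.pair.typeA} is available; the only step that needs care is the bookkeeping for the \emph{twisted} embeddings — verifying that $ L \cap \tildeL' $ is exactly the twisted graph over $ V \oplus (\lie{g}/\lie{k}) $ rather than something larger, and matching the twisted $ \GL_n(\C) $- and $ \GL_n(\C) \times \GL_n(\C) $-module structures on $ \Sym_n(\C) \oplus \Sym_n(\C) $ and $ \Mat_n(\C) \oplus \Mat_n(\C) $ with the adjoint actions on $ \lie{g}/\lie{k} $ and $ \tildeg/\tildek $. No genuine difficulty remains.
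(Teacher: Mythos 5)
Your proposal is correct and follows essentially the same route as the paper: the paper likewise takes the $\tildeK$-stable subspace $\tildeL_1 = V \oplus V^{\ast} \oplus \tildep$ with $\tildep = \Mat_n(\C)\oplus\Mat_n(\C)$ the block-antidiagonal part, identifies $L_1 = L\cap\tildeL_1$ with $V\oplus\Sym_n(\C)\oplus\Sym_n(\C)$ (using $\alpha=-1$), and restricts the orbit embedding of Theorem~\ref{theorem:complex.symm.pair.typeA}. Your write-up just makes explicit the module identifications and the formal restriction step that the paper leaves implicit.
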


\begin{proof}
We take a subspace 
\begin{align*}
\tildeL_1 = V \oplus V^{\ast} \oplus (\Mat_n(\C) \oplus \Mat_n(\C)) 
\subset V \oplus V^{\ast} \oplus \Mat_{2n}(\C) = \tildeL ,
\end{align*}
where $ \Mat_n(\C) $ is anti-diagonally embedded into $ \Mat_{2n}(\C) = \lie{gl}_{2n}(\C) $.  
Note that if we denote a Cartan decomposition of $ \lie{gl}_{2n}(\C) $ with respect to $ \tau $ by 
$ \lie{gl}_{2n}(\C) = \tildek \oplus \tildep $, then 
we can identify $ \tildep = \Mat_n(\C) \oplus \Mat_n(\C) $ as above.  

Since $ \sigma = \sigma_{\tildeL} $ preserves $ \tildeL_1 $, 
we take the restriction $ \sigma_1 = \sigma_{\tildeL} \restrict_{\tildeL_1} $ and 
$ \alpha = -1 $.  
Then we get 
\begin{equation*}
L_1 = \{ (v, \transpose{v}) \mid v \in V \} \oplus (\Sym_n(\C) \oplus \Sym_n(\C)) \subset \tildeL_1, 
\end{equation*}
and we can apply Theorem~\ref{theorem:main.theorem.abstract.version} 
(or Theorem~\ref{theorem:complex.symm.pair.typeA} directly).
\end{proof}

\begin{corollary}
For $ W = \C^{n} $, 
we have an inclusion of orbits:
\begin{equation*}
\label{corollary:enhanced.Kronecker.quiver}
\bigl( W \oplus \Sym_n(\C) \oplus \Sym_n(\C) \bigr) / \GL_n(\C) \inclusion 
\bigl( W \oplus W^{\ast} \oplus \Mat_n(\C) \oplus \Mat_n(\C) \bigr) / \GL_n(\C) \times \GL_n(\C).
\end{equation*}
\end{corollary}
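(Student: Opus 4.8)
The corollary is obtained by restricting the subspace $\tildeL_1$ from the proof of Corollary~\ref{cor:type.CI.to.AIII.full.version} once more, this time shrinking its vector part. Keeping the standard polarization $V = W^+ \oplus W^-$ of Section~\ref{section:enhanced.Lie.algebra}, the plan is to set
\[
\tildeL_2 = W^+ \oplus (W^+)^{\ast} \oplus \tildep \ \subset\ V \oplus V^{\ast} \oplus \gl_{2n}(\C) = \tildeL ,
\]
where $(W^+)^{\ast}$ is the corresponding summand of $V^{\ast} = (W^+)^{\ast} \oplus (W^-)^{\ast}$ and, exactly as in the proof of Corollary~\ref{cor:type.CI.to.AIII.full.version}, $\tildep = \Mat_n(\C) \oplus \Mat_n(\C)$ is anti-diagonally embedded into $\gl_{2n}(\C)$. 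The first step is to verify that $\tildeL_2$ is stable both under $\tildeK = \GL(W^+) \times \GL(W^-)$ and under $\sigma = \sigma_{\tildeL}$. Stability under $\tildeK$ is clear, since $V = W^+ \oplus W^-$ and $V^{\ast} = (W^+)^{\ast} \oplus (W^-)^{\ast}$ are decompositions into $\tildeK$-submodules and $\tildep$ is stable under the conjugation action of $\tildeK$; stability under $\sigma$ uses only the explicit formulas $\sigma((u,\transpose{v})) = (v,\transpose{u})$ and $\sigma(\tildep) = \tildep$ recorded in Section~\ref{section:enhanced.Lie.algebra}. The one point that genuinely has to be checked here is that $\sigma$ does not carry $W^+ \oplus (W^+)^{\ast}$ into the complementary half $W^- \oplus (W^-)^{\ast}$; this is immediate from the swap formula above, and it is the only place where I expect any (very mild) work to lie.

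Next I would apply Theorem~\ref{theorem:main.theorem.abstract.version} with $\tildeK$ in the role of $\tildeG$, $\tildeL_2$ in the role of $\tildeL$, the restriction $\sigma_{\tildeL}\restrict_{\tildeL_2}$ in the role of $\sigma_{\tildeL}$, and $\alpha = -1$ (which, by the Remark following Theorem~\ref{theorem:main.theorem.abstract.version}, amounts to replacing $\sigma_{\tildeL_2}$ by $-\sigma_{\tildeL_2}$). To invoke the theorem I must check Assumption~\ref{assumption:antisymmetric.square.in.Gx} for this restricted situation: given $x \in L_2$ and $h \in \tildeK_x$ with $\sigma(h) = h$, take $f = p(h)$ for a polynomial $p \in \C[T]$ which is a branch of the square root on the finite set of eigenvalues of $h$; such a $p$ exists because $h$ is invertible. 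Then $f$ is block-diagonal and invertible, hence lies in $\tildeK$; being a polynomial in $h$ it lies in $\tildeK_x$ and satisfies $f^2 = h$; and $\sigma(f) = p(\sigma(h)) = p(h) = f$ because $\sigma$ is a linear anti-automorphism. This is precisely the routine linear-algebra argument already used in the proof of Theorem~\ref{theorem:orbit.embedding.for.enhanced.Lie.algebra}.

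Finally, identifying the fixed-point subspace gives $L_2 \simeq W \oplus \Sym_n(\C) \oplus \Sym_n(\C)$ with $W = W^+ \simeq \C^n$, exactly as $\tildeL_1$ gave $L_1$ in the proof of Corollary~\ref{cor:type.CI.to.AIII.full.version}, and $K = (\tildeK)^{\theta} \simeq \GL_n(\C)$ acts with the twist recorded there. Then part~(1) of Theorem~\ref{theorem:main.theorem.abstract.version} yields the asserted injection $L_2/K \inclusion \tildeL_2/\tildeK$, which is the stated inclusion of orbit spaces, and part~(2) shows in addition that closed orbits go to closed orbits. Apart from the $\sigma$-stability and fixed-point bookkeeping in the first paragraph, every step is inherited verbatim from the proof of Corollary~\ref{cor:type.CI.to.AIII.full.version}, so I do not anticipate any real difficulty.
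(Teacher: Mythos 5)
Your proposal is correct and follows essentially the same route as the paper: restrict $\tildeL_1$ to $\tildeL_2 = W\oplus W^{\ast}\oplus\tildep$ with $W^{\ast}$ sitting inside $V^{\ast}$ as $\{\transpose{(v,0)}\}$ (so that $\sigma_{\tildeL}$, which swaps the two vector entries, preserves $\tildeL_2$ rather than throwing it into $W^-\oplus(W^-)^{\ast}$ — exactly the coordinate point the paper dwells on), then invoke Theorem~\ref{theorem:main.theorem.abstract.version} with $\alpha=-1$. Your explicit re-verification of Assumption~\ref{assumption:antisymmetric.square.in.Gx} via a polynomial square root in $\tildeK_x$ is a harmless elaboration of what the paper dispatches with ``now everything follows.''
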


\begin{proof}
We take a subspace $ \tildeL_2 $ in $ \tildeL_1 $ given in 
the proof of Corollary~\ref{cor:type.CI.to.AIII.full.version} as follows.  

Let $ W = W^+ $ be a Lagrangian subspace in $ V = \C^{2n} $ as explained above.  
Then $ W^{\ast} $ can be naturally identified with $ W^- $ by the symplectic form on $ V $.  
However, in our notation 
$ X = (u, \transpose{v}) + A \in V \oplus V^{\ast} \oplus \End(V) $, 
the second entry $ \transpose{v} $ should have been understood as $ \transpose{v} J $ 
if we choose a non-degenerate bilinear form on $ V $ as the symplectic form 
$ \langle v, u \rangle = \transpose{v} J u \; (u, v \in V) $.  
But we did not do it in that way to avoid the complexity of the notation.  
Therefore, $ W^{\ast} = W^- $ should be interpreted as 
$ \transpose{W^-} \cdot J = \transpose{W^+} $, 
which is the standard way to identify $ \C^n $ with $ (\C^n)^{\ast} $.  
Thus our $ W \oplus W^{\ast} $ is expressed in the coordinate 
\begin{equation*}
W \oplus W^{\ast} 
= \{ ((u, 0), \transpose{(v, 0)}) \mid u, v \in \C^n \} \subset V \oplus V^{\ast}.
\end{equation*}
With this understood we can see that $ \tildeL_2 $ is stable under $ \sigma = \sigma_{\tildeL} $ and we get 
\begin{align*}
L_2 
&= \{ ((u, 0), \transpose{(u, 0)}) \mid u \in \C^n \} \oplus (\Sym_n(\C) \oplus \Sym_n(\C))
\\
&= W \oplus (\Sym_n(\C) \oplus \Sym_n(\C)) \subset L_1 .
\end{align*}
Now everything follows.
\end{proof}

Corollary~\ref{corollary:enhanced.Kronecker.quiver} can be considered as an orbit embedding theorem 
for an \emph{enhanced cyclic quiver} 
(see \cite{Ohta.2010} and \cite{Kempken.1982} for cyclic quivers, 
and \cite{Johnson.2010} for enhanced cyclic quiver), 
and it seems deeply related to enhanced (or exotic) nilpotent cone for 
a double flag variety of type CI symmetric space (in preparation).

\section{Obstructions for orbit embedding}

In this section, we will exhibit a simple example which indicates what is an obstruction 
for orbit embedding.

In this example, we use traditional notation, which is different from the sections above.  
Let $ G = \GL_2(\C) $ which acts on $ \tildeX = \GL_2(\C) $ itself by conjugation: 
$ g \cdot x = g x g^{-1} \; (g \in G, \, x \in \tildeX) $.  
We define an involutive anti-automorphism $ \sigma : G \to G $ by 
$ \sigma(g) = I_{1,1}^{-1} g^{-1} I_{1,1} $, where 
$ I_{1,1} = \mattwo{1}{0}{0}{-1} $.  
Then clearly $ \sigma(g \cdot x) = \sigma(g)^{-1} \cdot \sigma(x) $ holds.  
Define an involutive automorphism $ \theta \in \Aut(G) $ by 
$ \theta(g) = \sigma(g)^{-1} = I_{1,1}^{-1} g I_{1,1} $, and put
\begin{align*}
K &= G^{\theta} = \{ g \in G \mid \sigma(g)= g^{-1} \} = \{ g \in G \mid \theta(g) = g \} , 
\\
X &= G^{\sigma} = \{ g \in G \mid \sigma(g) = g \} .
\end{align*}
By direct calculation, we know
\begin{align*}
K &= \Bigl\{ \mattwo{s}{0}{0}{t} \bigm| s t \neq 0 \Bigr\}
\qquad 
\text{ and }
\\
X &= \Bigl\{ \mattwo{a}{b}{c}{a} \bigm| a^2 - bc = 1, \; b \neq 0 \text{ or } c \neq 0 \Bigr\} 
\cup \Bigl\{ \mattwo{a}{0}{0}{d} \bigm| a^2 = d^2 = 1 \Bigr\} .
\end{align*}
We want to analyze the map $ X/K \to \tildeX / G $.  
This is almost an inclusion but not quite.

After simple computations, it turns out that a complete set of representatives of $ K $-orbits in $ X $ is 
\begin{align*}
& \Bigl\{ \mattwo{a}{b}{b}{a} \bigm| a^2 - b^2 = 1, \; \Re b > 0 \text{ or } b \in i \R_{\geq 0} \Bigr\} 
\cup 
\\
& \Bigl\{ \mattwo{a}{1}{0}{a} \bigm| a = \pm 1 \Bigr\} 
\cup \Bigl\{ \mattwo{a}{0}{1}{a} \bigm| a = \pm 1 \Bigr\} 
\cup \Bigl\{ \mattwo{a}{0}{0}{-a} \bigm| a = \pm 1 \Bigr\} .
\end{align*}
Among them, the following is the complete list of pairs of 
mutually different representatives of $ K $-orbits, which generates the same $ G $-orbits.  
There are only three of such pairs (and no triples).
\begin{align*}
\Bigl\{ \mattwo{a}{1}{0}{a} , \mattwo{a}{0}{1}{a} \Bigr\} \;\; (a = \pm 1), 
\qquad
\Bigl\{ \mattwo{1}{0}{0}{-1} , \mattwo{-1}{0}{0}{1} \Bigr\} .
\end{align*}
For example, if we take 
$ x = \mattwo{1}{1}{0}{1} $, then the stabilizer is 
\begin{equation*}
G_x = \Bigl\{ \mattwo{\alpha}{\beta}{0}{\alpha} \bigm| \alpha \neq 0 \Bigr\} 
\qquad 
\text{ and }
\qquad
G_x^{\sigma} = \Bigl\{ \mattwo{\alpha}{\beta}{0}{\alpha} \bigm| \alpha = \pm 1 \Bigr\} .
\end{equation*}
Now we see that for $ h = \mattwo{-1}{\beta}{0}{-1} \in G_x^{\sigma} $, 
there is no $ f \in G_x^{\sigma} $ such that $ h = f^2 $.  
Similarly, if we take $ x = \mattwo{1}{0}{0}{-1} $, then 
$ G_x = K $ and 
\begin{align*}
G_x^{\sigma} &= \Bigl\{ \mattwo{\alpha}{0}{0}{\delta} \bigm| \alpha^2 = \delta^2 = \pm 1 \Bigr\} .
\end{align*}
Again, for $ h = - 1_2 $ or $ \pm \mattwo{1}{0}{0}{-1} \in G_x^{\sigma} $, 
there exists no $ f \in G_x^{\sigma} $ which satisfies $ h = f^2 $.  
These are obstructions which prevent the map $ X/K \to \tildeX / G $ 
from being injective.

\renewcommand{\MR}[1]{}

\def\cftil#1{\ifmmode\setbox7\hbox{$\accent"5E#1$}\else
  \setbox7\hbox{\accent"5E#1}\penalty 10000\relax\fi\raise 1\ht7
  \hbox{\lower1.15ex\hbox to 1\wd7{\hss\accent"7E\hss}}\penalty 10000
  \hskip-1\wd7\penalty 10000\box7} \def\cprime{$'$} \def\cprime{$'$}
  \def\Dbar{\leavevmode\lower.6ex\hbox to 0pt{\hskip-.23ex \accent"16\hss}D}
\providecommand{\bysame}{\leavevmode\hbox to3em{\hrulefill}\thinspace}
\providecommand{\MR}{\relax\ifhmode\unskip\space\fi MR }
\providecommand{\MRhref}[2]{%
  \href{http://www.ams.org/mathscinet-getitem?mr=#1}{#2}
}
\providecommand{\href}[2]{#2}

\end{document}